\def\ve#1{\mathchoice{\mbox{\boldmath$\displaystyle\bf#1$}}
	{\mbox{\boldmath$\textstyle\bf#1$}}
	{\mbox{\boldmath$\scriptstyle\bf#1$}}
	{\mbox{\boldmath$\scriptscriptstyle\bf#1$}}}
\newcommand{\Z}{\ensuremath{\mathbb{Z}}}
\newcommand{\R}{\ensuremath{\mathbb{R}}}
\newcommand{\Q}{\ensuremath{\mathbb{Q}}}
\newcommand{\OPT}
{\ensuremath {\mathsf {OPT}}}
\def\Orthant_j{{\mathcal O}_{j}}
\newcommand\veb{{\ve b}}
\newcommand\veg{{\ve g}}
\newcommand\veh{{\ve h}}
\newcommand\vel{{\ve \ell}}
\newcommand\vep{{\ve p}}
\newcommand\veu{{\ve u}}
\newcommand\vev{{\ve v}}
\newcommand\vew{{\ve w}}
\newcommand\vex{{\ve x}}
\newcommand\vey{{\ve y}}
\newcommand\vez{{\ve z}}
\newcommand{\OO}{{\mathcal{O}}}
\newtheorem*{th1}{Theorem 1}{\bf}{\it}
\newtheorem*{th2}{Theorem 3}{\bf}{\it}
\newtheorem*{th3}{Theorem 2}{\bf}{\it}
\NewDocumentCommand\overann{s O{\big} m m}{
	\IfBooleanTF{#1}{%
		\overset{\mathclap{#4}}{#3}%
	}{
		\overset{\mathclap{\substack{{#4} \\ #2\downarrow}}}{#3}%
	}
}
\NewDocumentCommand\underann{s O{\big} m m}{
	\IfBooleanTF{#1}{%
		\overset{\mathclap{#4}}{#3}%
	}{
		\underset{\mathclap{\substack{ #2\uparrow \\ {#4}}}}{#3}%
	}
}
\newcommand\nullity{\operatorname{nullity}}
\newcommand\rank{\operatorname{rank}}
\newcommand\img{\operatorname{img}}
\newcommand\supp{\operatorname{supp}}
\newcommand\iv[1]{[#1]}
\newcommand\nisupp{\operatorname{nisupp}}
\title{Approximation algorithms for integer programming with resource augmentation} 
\author{Hauke Brinkop}{Department of Computer Science, Kiel University, Germany}{hab@informatik.uni-kiel.de}{https://orcid.org/0000-0002-7791-2353}{Funded by the Deutsche Forschungsgemeinschaft (DFG,
   German Research Foundation) (No. 335406402)}
\author{Hua Chen}{School of Mathematical Sciences, Zhejiang University of Technology, China}{chenhua\_by@zju.edu.cn}{https://orcid.org/0000-0002-2315-5254}{Research supported in part by NSFC (No. 12401423)}
\author{Lin Chen}{College of Computer Science and Technology, Zhejiang University, China}{chenlin198662@gmail.com}{https://orcid.org/0000-0003-3909-4916}{Research supported in part by NSFC (No. 62572428)}
\author{Klaus Jansen}{Department of Computer Science, Kiel University, Germany}{kj@informatik.uni-kiel.de}{https://orcid.org/0000-0001-8358-6796}{Funded by the Deutsche Forschungsgemeinschaft (DFG, German Research Foundation) (No. 
528381760)}
\author{Guochuan Zhang}{College of Computer Science and Technology, Zhejiang University, China}{zgc@zju.edu.cn}{https://orcid.org/0000-0003-1947-7872}{Research supported in part by NSFC (No. 12271477)}
\authorrunning{H. Brinkop et al.} 
\keywords{Approximation algorithms, Resource augmentation, Integer programs, $n$-fold IPs} 
\begin{document}
\maketitle

\begin{abstract}
Solving a general integer program (IP) is NP-hard. The classic algorithm [Papadimitriou, J.ACM '81] for IPs has a running time $n^{\OO(m)}(m\cdot\max\{\Delta,\|\veb\|_{\infty}\})^{\OO(m^2)}$, where $m$ is the number of constraints, $n$ is the number of variables, and $\Delta$ and $\|\veb\|_{\infty}$ are, respectively, the largest absolute values among the entries in the constraint matrix and the right-hand side vector of the constraint.  The running time is exponential in $m$, and becomes pseudo-polynomial if $m$ is a constant. In recent years, there has been extensive research on FPT (fixed parameter tractable) algorithms for the so-called $n$-fold IPs, which may possess a large  number of constraints, but the constraint matrix satisfies a specific block structure. It is remarkable that these FPT algorithms take as parameters $\Delta$ and the number of rows and columns of some small submatrices. If $\Delta$ is not treated as a parameter, then the running time becomes pseudo-polynomial even if all the other parameters are taken as constants.   

This paper explores the trade-off between time and accuracy in solving an IP. We show that, for arbitrary small $\varepsilon>0$, there exists an algorithm for IPs with $m$ constraints that runs in ${f(m,\varepsilon)}\cdot\textnormal{poly}(|I|)$ time, and 
returns a near-feasible solution that violates the constraints by at most $\varepsilon\Delta$. 
Furthermore, for $n$-fold IPs, we establish a similar result---our algorithm runs in time that depends on the number of rows and columns of small submatrices together with $1/\varepsilon$, and returns a solution that slightly violates the constraints. Meanwhile, both solutions guarantee that their objective values are no worse than the corresponding optimal objective values satisfying the constraints.
As applications, our results can be used to obtain additive approximation schemes for multidimensional knapsack as well as scheduling. 

 
\end{abstract}

\clearpage

\section{Introduction}\label{sec:typesetting-summary}

Integer programs (IPs) find widespread applications in many combinatorial optimization problems. Generally, an integer program can be formulated as follows:
\begin{eqnarray}\label{ILP:1}
 &\min\set{\vew\vex : {H} \vex=\veb, \, \vel\le \vex\le \veu,\, \vex\in \Z^{n} }, 
\end{eqnarray}
where $H\in \Z^{m\times n}$ is the constraint matrix, $\veb\in \Z^{m}$ is the vector on the right-hand side of the equality constraint, $\vew\in \Z^{n}$ is the coefficient vector of the objective function, and $\vel,\veu\in \Z^{n}$ are the lower and upper bounds of the variables, respectively. 
Karp~\cite{karp1972reducibility} showed that solving IP~(\ref{ILP:1}) is NP-hard in 1972. Subsequent studies thus focus on finding efficient algorithms for IPs where the constraint matrix has some specific structure.  

Lenstra~\cite{lenstra1983integer} studied IPs with few variables and presented a $2^{\OO(n^3)}\cdot\text{poly}(|I|)$-time algorithm, where $\textnormal{poly}(|I|)$ denotes a polynomial in the input length $|I|$. The running time has been improved in several subsequent works~\cite{kannan1987minkowski, dadush2011enumerative,dadush2012integer,reis2023subspace}. Very recently, Reis and Rothvoss~\cite{reis2023subspace} proved that IP~(\ref{ILP:1}) can be solved in $(\log n)^{\OO(n)}\cdot\text{poly}(|I|)$-time.   


Papadimitriou~\cite{papadimitriou1981complexity} considered IPs with few constraints and provided an algorithm running in $n^{\OO(m)}(m\cdot\max\{\Delta,\|\veb\|_{\infty}\})^{\OO(m^2)}$-time, where $\Delta\coloneqq\|H\|_{\infty}$  represents the largest absolute value among all entries of $H$, and $\|\veb\|_{\infty}$ represents the largest absolute value among all entries of $\veb$. The running time was later improved by Eisenbrand and Weismantel~\cite{eisenbrand2019proximity}, and Jansen and Rohwedder~\cite{jansen2023integer}. In particular, for the special case when $\vel=\ve0$ and $\veu=\ve\infty$, IP~(\ref{ILP:1}) can be solved in $\OO(\sqrt{m}\Delta)^{2m}\cdot\log (\|\veb\|_\infty)+\OO(nm)$ time~\cite{jansen2023integer}, and lower bounds on the running times have also been studied by~\cite{knop2020tight,jansen2023integer,fomin2023optimality}.

The results mentioned above assume that the constraint matrix $H$ has either few rows or few columns. There has also been extensive research on IPs where the number of rows and columns of $H$ can be large but $H$ possesses some block structure; see, e.g.,~\cite{de2008n,hemmecke2013n,hemmecke2003decomposition,aschenbrenner2007finiteness,cslovjecsek2021efficient,eisenbrand2019algorithmic,jansen2021double,klein2021complexity,koutecky2018parameterized,cslovjecsek2024parameterized}. One most widely studied structure is the so-called $n$-fold, where $H$ consists of small submatrices $A^i$, $D^i$ of the following form
\begin{eqnarray*}\label{eq:matrix}
{H}\coloneqq
\begin{pmatrix}
  D^1 & D^2 & \cdots & D^n \\
  A^1 & 0  &   & 0  \\
  0  & A^2 &   & 0  \\
\vdots   &   & \ddots &  \\
 0  & 0  &   & A^n
\end{pmatrix}
,
\end{eqnarray*}
where $A^i\in \Z^{s_A\times t}$, and $D^i\in \Z^{s_D\times t}$ for all $i$. Many parameterized algorithms for $n$-fold IPs have been developed~\cite{hemmecke2013n,altmanova2019evaluating,cslovjecsek2021block,eisenbrand2018faster,eisenbrand2019algorithmic,jansen2022empowering,jansen2019near,cslovjecsek2024parameterized}. In particular,
Cslovjecsek et al.~\cite{cslovjecsek2021block} presented an algorithm of running time $2^{\OO(s^2_As_D)}(s_Ds_A\Delta)^{\OO(s_A^2+s_As_D^2)} \cdot (nt)^{1+o(1)}$, where $\Delta=\|H\|_{\infty}$, which is currently the best known FPT algorithm parameterized by $\Delta, s_A,s_D$. Here, an FPT algorithm refers to an algorithm with running time $g(k)\cdot\textnormal{poly}(|I|)$, where $g$ is a computable function, $|I|$ is the input length, and $k$ is a parameter independent of $|I|$.
    Note that their approach relies on proximity, which is (necessarily) dependent on $\Delta$ and thus not usable for our use-case; if there was a proximity bound of the form $f(s_A,s_D) \cdot \textnormal{poly}(|I|)$, this would imply an $n$-fold algorithm (using the approach in \cite{eisenbrand2019proximity}) that is FPT with parameters $s_A,s_D$. 
    Finally, this would imply a polynomial time algorithm for subset sum~\cite{lassota2025parameterized}. 

We see that the running time is either exponential in the number of variables or dependent on $\Delta$ and is thus pseudo-polynomial. Such a situation is unavoidable if we are looking for an exact algorithm that solves IP~\eqref{ILP:1} optimally, since all algorithms for bounded number of constraints must also depend on $\Delta$ as it could otherwise solve subset sum in polynomial time. However, a polynomial time algorithm may be possible if we are allowed to solve IP~\eqref{ILP:1} approximately in the sense that the constraints can be violated slightly.

One most well-known example is Knapsack, where $H$ contains one row, i.e., $m=1$, $\vel=\ve0$, $\veu=\ve1$, and all entries of $\vew$ and $H$ are nonnegative\footnote{Knapsack requires $H\vex\le \veb$ instead of $H\vex= \veb$, but we can add dummy items of 0 profit and unit weight to enforce the equation.}. Knapsack admits an FPTAS whose running time is polynomial in the input length $|I|$ and $1/\varepsilon$ instead of $\Delta$ (see, e.g.,~\cite{ibarra1975fast,chan2018approximation,jin2019improved,deng2023approximating,chen2024nearly,mao20241}). Alternatively, it admits an algorithm of the same running time that returns a solution whose objective value is at least \OPT\ (where \OPT\ denotes the optimal objective value satisfying the constraint), while violating the constraint by a factor of $1+\varepsilon$. 

Another example is due to Dadush~\cite{dadush2014randomized}, who presented an algorithm of running time $2^{\OO(n)}{(1/\varepsilon^2)}^n$ which either asserts that the convex body $K$ described by the constraints does not contain any integer points, or finds
an integer point in the body stemming from $K$ scaled by $1+\varepsilon$ from its center of gravity.

Motivated by the above two examples, this paper aims to explore approximation algorithms for IPs. We are interested in an algorithm that returns a near-feasible solution that may violate the constraints by a factor of $1+\varepsilon$, but runs in polynomial time in the input. Such relaxation of constraints is also referred to as \emph{resource augmentation}, which has received much attention in various combinatorial optimization problems, including  Knapsack~\cite{bringmann2022faster,iwama2010online}, Subset Sum~\cite{chen2024approximating,mucha2019subquadratic}, Square Packing~\cite{fishkin2005packing}, etc.


\subsection{Our contribution}

Our first result is an approximation scheme for IPs with few constraints. 

 \begin{theorem}\label{conj:IP-general}
 	Given is an integer program $\min\{\vew\vex: H \vex=\veb, \, \vel\le \vex\le \veu,\, \vex\in \Z^{n} \}$ 
    with optimal objective value $\OPT$, where $H\in\Q^{m\times n}$.  
 	Then for arbitrary small $\varepsilon>0$, there exists an algorithm of running time ${f(m,\varepsilon)}\cdot\textnormal{poly}(|I|)$ which returns a near-feasible solution $\tilde{\vex}$ such that $\tilde{\vex}\in \set { \vex : \|H\vex-\veb\|_{\infty}\le\varepsilon\Delta, \, \vel\le \vex\le \veu,\, \vex\in \Z^{n} }$, and $\vew\tilde{\vex}\le \OPT$, where $\Delta=\|H\|_{\infty}$. 
 \end{theorem}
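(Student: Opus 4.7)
My plan is to replace the input IP with an aggregated IP whose coefficients depend only on $m$ and $1/\varepsilon$ (not $\Delta$), solve the aggregated IP exactly using an existing pseudo-polynomial algorithm for IPs with few constraints (e.g.\ Jansen--Rohwedder~\cite{jansen2023integer}), and then disaggregate to obtain a near-feasible integer solution to the original IP.

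\emph{Type classification and aggregated IP.} Set $\delta\coloneqq\varepsilon\Delta/(2m)$. Round every column $\vea_i$ of $H$ coordinate-wise to the nearest multiple of $\delta$, obtaining $\tilde{\vea}_i\in(\delta\Z)^m$. Because $\|\vea_i\|_\infty\le\Delta$, there are at most $|T|\le(2m/\varepsilon+1)^m=f(m,\varepsilon)$ distinct rounded vectors; partition the columns into the corresponding classes $C_t$, sort each by weight, and define the piecewise-linear convex function $g_t(y_t)\coloneqq \min\{\sum_{i\in C_t}w_ix_i:\ \sum_{i\in C_t}x_i=y_t,\ \vel_i\le x_i\le\veu_i,\ x_i\in\Z\}$, evaluated greedily by filling the cheapest columns of $C_t$ first. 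With $\tilde{\veb}$ denoting $\veb$ rounded on the $\delta$-grid and $L_t=\sum_{i\in C_t}\vel_i$, $U_t=\sum_{i\in C_t}\veu_i$, the aggregated IP is
\[
\min \sum_t g_t(y_t)\quad\text{s.t.}\quad \sum_t \tilde{\vea}_t\, y_t=\tilde{\veb},\quad L_t\le y_t\le U_t,\ y_t\in\Z.
\]
After dividing by $\delta$ it has $|T|=f(m,\varepsilon)$ variables, $m$ constraints, integer coefficients of magnitude $O(m/\varepsilon)$, and right-hand side of magnitude $O(\|\veb\|_\infty/\delta)$; linearising the convex objective at its at most $n$ breakpoints and invoking Jansen--Rohwedder~\cite{jansen2023integer} solves it in $f(m,\varepsilon)\cdot\log\|\veb\|_\infty+O(nm)=f(m,\varepsilon)\cdot\textnormal{poly}(|I|)$ time.

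\emph{Objective bound.} The optimal $\vex^*$ of the original IP aggregates to $y_t^*=\sum_{i\in C_t}x_i^*$, which is feasible for the aggregated IP (after $\tilde{\veb}$ absorbs the $\delta$-rounding of $\veb$) and has objective $\vew\vex^*=\OPT$. Consequently the aggregated optimum is $\le\OPT$, and disaggregating its solution within each class cheapest-first respects the bounds and yields an integer $\tilde{\vex}$ with $\vew\tilde{\vex}\le\OPT$ by construction of $g_t$.

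\emph{Violation bound---the main obstacle.} Writing $\vep_i\coloneqq\vea_i-\tilde{\vea}_{t(i)}$, we get $\|H\tilde{\vex}-\veb\|_\infty\le\|\tilde{\veb}-\veb\|_\infty+\|\sum_i\vep_i\tilde{x}_i\|_\infty$ with $\|\vep_i\|_\infty\le\delta$. The second term admits only the naive bound $\delta\|\tilde{\vex}\|_1$, which can be far larger than $\varepsilon\Delta$ because $\|\tilde{\vex}\|_1$ is not a priori bounded. This is where I expect the bulk of the technical work. My plan is to (a) precede the aggregation by an LP-proximity step that reduces the instance to a residual whose right-hand side has magnitude $O(m\Delta)$ rather than $\|\veb\|_\infty$, and (b) within each class $C_t$ permute the columns using a Steinitz-style rearrangement so that the partial sums $\sum_{i:\pi(i)\le y_t}\vep_{\pi(i)}$ stay within $O(m\delta)$ of the proportional expectation for every $y_t$, making the per-class error essentially independent of $y_t$. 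The delicate point is reconciling this rearrangement with the cheapest-first order needed for the objective bound; I would handle it by refining the type partition so that within each refined class all columns are weight-equivalent up to a tolerance absorbed into the final accounting, with the induced blow-up of $|T|$ folded into $f(m,\varepsilon)$.
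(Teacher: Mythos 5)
There is a genuine gap, and it is precisely at the two places where your proposal replaces the exact constraint by a rounded one. First, the feasibility/objective step already fails: the aggregation $y_t^*=\sum_{i\in C_t}x_i^*$ of the true optimum satisfies $\sum_t\tilde{\vea}_t y_t^*=\veb-\sum_i\vep_i x_i^*$, and the discarded residual $\sum_i\vep_i x_i^*$ has norm up to $\delta\|\vex^*\|_1$, which is not absorbed by rounding $\veb$ to the $\delta$-grid; so $y^*$ need not be feasible for your aggregated IP at all, and the claim that the aggregated optimum is $\le\OPT$ is unsupported. Second, as you yourself flag, the disaggregated solution violates the constraint by up to $\delta\|\tilde{\vex}\|_1$, and with $\delta=\varepsilon\Delta/(2m)$ you would need $\|\tilde{\vex}\|_1\le 2m$, which is hopeless in general. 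The proposed repairs do not close this: LP-proximity bounds of Eisenbrand--Weismantel type are of order $m(2m\Delta+1)^m$ and hence reintroduce the $\Delta$-dependence you must avoid, and a Steinitz-style rearrangement inside a class controls deviation from the \emph{proportional} residual sum $(y_t/|C_t|)\sum_{i\in C_t}\vep_i$, which is itself of order $y_t\delta$ and unbounded, besides conflicting with the cheapest-first order your objective bound needs. In short, any scheme that commits every column to its rounded representative and then solves an exact IP cannot certify an $\varepsilon\Delta$ violation independently of $\|\vex\|_1$.

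The paper's route avoids this by never discarding the residuals. It keeps the exact constraint $\sum_k\vev_k y_k+\sum_k\sum_{j\in I_k}\tilde{\veh}_j x_j=\veb$, makes only the $(2/\delta)^m$ group variables $y_k$ integral and relaxes the $x_j$ to be continuous, and solves this mixed IP exactly with Kannan's algorithm (few integer variables), so the relaxation is a genuine relaxation of the input IP and its value is at most $\OPT$ with no constraint error so far. Fixing $\vey=\vey^*$ yields an LP whose vertex solution, by a rank argument exploiting that each nonzero group-sum row must contain at least two fractional variables, has at most $2m$ fractional entries (Claim~\ref{claim:1}). Only these $2m$ variables are then rounded, in a balanced way inside each group (exactly $\gamma_k$ rounded up, the cheapest first), which preserves $\sum_{j\in I_k}x_j=y_k$ and the objective bound, so the only error is $2m$ residual vectors of norm at most $\delta\Delta$, i.e.\ $\OO(m\delta\Delta)$, and $\delta=\OO(\varepsilon/m)$ finishes the proof. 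The missing idea in your write-up is exactly this combination: keep the rounding error inside the constraint via continuous variables, and use a vertex/non-singularity argument to ensure only $\OO(m)$ variables ever get rounded.
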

In particular, the precise running time in Theorem~\ref{conj:IP-general} is $2^{(\frac{m}{\varepsilon})^{\OO(m)}}\cdot \textnormal{poly}(|I|)$.
We remark that \OPT\, always refers to the optimal objective value {\bf without} violating the constraints. 

Theorem~\ref{conj:IP-general} extends Papadimitriou's algorithm in the direction of approximation. When $\varepsilon<1/\Delta$, for example, $\varepsilon=1/(2\Delta)$, Theorem~\ref{conj:IP-general} implies an exact FPT algorithm to solve IP~\eqref{ILP:1} with running time  $2^{{(m\Delta)}^{\OO(m)}}\cdot \textnormal{poly}(|I|)$.
A special case of IP~\eqref{ILP:1} is the classic multidimensional knapsack problem, where all the entries in $\vew, H, \vel,\veu$ are nonnegative. Thus Theorem~\ref{conj:IP-general} gives an EPTAS for the multidimensional knapsack problem with resource augmentation (i.e., by slightly relaxing the multidimensional knapsack constraints). Approximation schemes with resource augmentation (or weak approximation schemes) for the knapsack problem have been studied extensively in recent years~\cite{mucha2019subquadratic,bringmann2022faster,chen2025weakly}. Note that if the constraints cannot be relaxed, then there exists a PTAS of running time $O(n^{\lceil m/\epsilon\rceil})$ for $m$-dimensional knapsack~\cite{caprara2000approximation}, and such a running time is essentially the best possible assuming ETH (Exponential Time Hypothesis)~\cite{doron2024fine}. In particular, 2-dimensional knapsack does not admit an EPTAS assuming W[1] $\neq$ FPT~\cite{kulik2010there}. Moreover, it does not admit a PTAS of running time $n^{o(1/\epsilon)}$ assuming ETH~\cite{jansen2016bounding}.

We then extend our result to $n$-fold IPs and obtain the following.

\begin{theorem}\label{conj-coro:nfold-additive}
Given is an integer program $\min\{\vew\vex: \sum_{i=1}^nD^i\vex^i=\veb^0, A^i\vex^i= \veb^i,1\le i\le n,\, \ve0\le \vex\le \veu, \vex\in \Z^{nt} \}$ with optimal objective value $\OPT$, where $A^i\in\Q^{s_A\times t_A}_{\ge0}$, $D^i\in\Q^{s_D\times t_D}_{\ge 0}$, and $t_A=t_D=t$.   
	Then for arbitrary small
 $\varepsilon>0$, there exists an algorithm of running time ${f(s_A,s_D,t,\varepsilon)}\cdot\textnormal{poly}(|I|)$ which returns a near-feasible solution $\tilde{\vex}$ such that $\tilde{\vex}\in\{\vex: (1-\varepsilon)\veb^0\le\sum_{i=1}^nD^i\vex^i\le (1+\varepsilon)\veb^0,\, (1-\varepsilon)\veb^i\le A^i\vex^i\le (1+\varepsilon)\veb^i, 1\le i\le n,\, \ve0\le \vex\le\veu,\, \vex\in \Z^{nt} \}$, and $\vew\tilde{\vex}\le \OPT$. 
\end{theorem}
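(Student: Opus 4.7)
The plan is to reduce the $n$-fold IP to two successive applications of Theorem~\ref{conj:IP-general}. After a preliminary step that coarsens each block's data and discretizes the possible per-block contribution to the linking constraint, Theorem~\ref{conj:IP-general} is invoked first on a small local IP per (type, configuration) pair, and then on an aggregate IP that simply ``counts how many blocks of each type pick each configuration''. The key structural observation enabling everything is that, because $A^{i},D^{i},\vex$ are nonnegative and $\sum_{i}D^{i}\vex^{i}=\veb^{0}$ in any feasible solution, the contribution $\vecc^{i}:=D^{i}\vex^{i}$ of each block to the linking constraint is pinned to the box $[\ve0,\veb^{0}]$.

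In the preliminary step I fix $\delta=\Theta(\varepsilon)$ and round every entry of $A^{i},D^{i},\veb^{i},\vew^{i}$ (and of the finite coordinates of $\veu^{i}$) to the nearest integer power of $1+\delta$; by nonnegativity this perturbs every constraint and the objective only by a $(1\pm\delta)$ factor. After rounding, the $n$ blocks fall into at most $\tau=(\log(\Delta+\|\veb\|_{\infty}+\|\vew\|_{\infty}+\|\veu\|_{\infty})/\delta)^{\OO((s_{A}+s_{D})t)}$ equivalence types. Simultaneously I discretize contributions via the multiplicative grid
\[
\Gamma=\bigl(\{0\}\cup\{(1+\delta)^{k}:0\le k\le\log_{1+\delta}\|\veb^{0}\|_{\infty}\}\bigr)^{s_{D}},
\]
of size $K=\OO(\log\|\veb^{0}\|_{\infty}/\delta)^{s_{D}}$; every feasible $\vecc^{i}$ then rounds down to a unique $\gamma\in\Gamma$ with $\gamma\le\vecc^{i}\le(1+\delta)\gamma$, and multiplicative errors propagate through the sum $\sum_{i}\vecc^{i}=\veb^{0}$ without blow-up.

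For every type $T$ and every grid point $\gamma\in\Gamma$, I apply Theorem~\ref{conj:IP-general} to the local IP
\[
c_{T,\gamma}=\min\bigl\{\vew^{i}\vex:A^{i}\vex=\veb^{i},\ \gamma_{j}\le(D^{i}\vex)_{j}\le(1+\delta)\gamma_{j}\ \forall j,\ \ve0\le\vex\le\veu^{i},\vex\in\Z^{t}\bigr\}
\]
(taking any block $i$ of type $T$), which has only $s_{A}+2s_{D}$ constraints; in $f(s_{A}+2s_{D},\delta)\cdot\mathrm{poly}(|I|)$ time this yields, when feasible, a witness $\tilde{\vex}_{T,\gamma}$ of cost $c_{T,\gamma}$ whose local equalities are violated by at most $\delta\Delta$. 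I then solve the aggregate IP in nonnegative integer variables $y_{T,\gamma}$, with $\sum_{\gamma}y_{T,\gamma}=n_{T}$ for each type $T$, linking equalities $\sum_{T,\gamma}\gamma\,y_{T,\gamma}=\veb^{0}$, and objective $\min\sum_{T,\gamma}c_{T,\gamma}y_{T,\gamma}$; since this IP has only $\tau+s_{D}$ rows, a second application of Theorem~\ref{conj:IP-general} returns in $f(\tau+s_{D},\delta)\cdot\mathrm{poly}(|I|)$ time a solution $y^{\star}$ of cost $\le\OPT$ with linking slack at most $\delta\Delta$. The final $\tilde{\vex}$ is the concatenation of the $n$ witnesses $\tilde{\vex}_{T,\gamma}$ selected by $y^{\star}$. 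Correctness of the cost bound follows because the contributions $\vecc^{i,\star}=D^{i}\vex^{i,\star}$ of any (rounded) optimum round into cells $\gamma^{i,\star}\in\Gamma$ for which $\vex^{i,\star}$ itself is a feasible witness, so $c_{T(i),\gamma^{i,\star}}\le\vew^{i}\vex^{i,\star}$ and the aggregate IP admits an integral feasible solution of total cost $\le\OPT$.

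The hard part will be the error accounting: the three independent sources of slack --- the data rounding, the grid rounding of $\vecc^{i}$, and the additive $\delta\Delta$ violations produced by each Theorem~\ref{conj:IP-general} call --- must together stay within the target multiplicative $(1\pm\varepsilon)$ bound on \emph{every} coordinate of every $\veb^{i}$ and of $\veb^{0}$. This is most delicate when some coordinate of $\veb^{i}$ is substantially smaller than $\Delta$, where an additive $\delta\Delta$ violation is not a small multiplicative fraction of the right-hand side. My plan for those coordinates is to preprocess the offending variables to $0$ using nonnegativity of $A^{i}$, or to fall back on Lenstra's $2^{\OO(t^{3})}\cdot\mathrm{poly}(|I|)$-time exact algorithm for the affected local IPs, which is still FPT because $t$ is a parameter; picking $\delta=\varepsilon/c$ for a suitable universal constant $c$ then closes the error budget and gives the claimed running time $f(s_{A},s_{D},t,\varepsilon)\cdot\mathrm{poly}(|I|)$.
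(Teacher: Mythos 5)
Your reduction has several gaps that I do not think can be patched without changing the approach. The most basic one is parameter-boundedness: your number of types $\tau$ and your grid $\Gamma$ have sizes $(\log(\cdot)/\delta)^{\OO((s_A+s_D)t)}$ and $\OO(\log\|\veb^0\|_\infty/\delta)^{s_D}$, i.e.\ they grow (polylogarithmically) with the input, not with the parameters alone. Your aggregate IP therefore has $\tau+s_D$ rows and $\tau|\Gamma|$ columns that depend on $|I|$, and invoking Theorem~\ref{conj:IP-general} on it costs $2^{((\tau+s_D)/\varepsilon)^{\OO(\tau+s_D)}}$, which for $\tau=\mathrm{polylog}(|I|)$ is far beyond $f(s_A,s_D,t,\varepsilon)\cdot\mathrm{poly}(|I|)$ (and Lenstra does not rescue this step either, since the number of variables is also unbounded in the parameters). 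Second, the error accounting fails structurally, not just in the ``delicate'' case you flag: (a) each of the $n$ local witnesses returned by Theorem~\ref{conj:IP-general} may overshoot its grid cell by an additive $\delta\Delta$, and these errors sum to $n\delta\Delta$ on the linking constraint, which is not within $(1\pm\varepsilon)\veb^0$ when $\veb^0\ll n\Delta$ (nonnegativity pins each true contribution into $[\ve0,\veb^0]$, but not the near-feasible witnesses you actually use); (b) applying Theorem~\ref{conj:IP-general} to the aggregate IP violates \emph{every} row by up to $\varepsilon\Delta_{\mathrm{agg}}$ with $\Delta_{\mathrm{agg}}$ as large as $\|\veb^0\|_\infty$, so the counting constraints $\sum_\gamma y_{T,\gamma}=n_T$ can be off by a huge amount (the final ``concatenation of $n$ witnesses'' is then not even a well-defined assignment) and any coordinate of $\veb^0$ smaller than $\Delta_{\mathrm{agg}}$ is violated beyond the $(1\pm\varepsilon)$ target. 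Third, the preliminary multiplicative rounding of the data is not sign-safe for $\vew$ (the theorem does not assume $\vew\ge\ve0$) and in any case degrades the objective to $(1+\delta)\OPT$ and may push the output past the true bound $\veu$, whereas the theorem demands $\vew\tilde\vex\le\OPT$ and $\ve0\le\tilde\vex\le\veu$ exactly.

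For comparison, the paper avoids all three problems by never classifying blocks according to their rounded numerical data. It normalizes each $\veb^i$ to a $0/1$ vector, splits the columns of $A^i$ into big and small with threshold $\psi=\varepsilon/(2t)$, and decomposes each variable as $x^i_j=\lambda^i_j(x^i_j)'+(x^i_j)''$; the minor parts contribute at most $\varepsilon$ per coordinate to $A^i\vex^i$, while the major parts are confined to a configuration set $\mathcal{P}^i$ of size $\OO(s_At/\varepsilon)^t$, bounded purely in the parameters. The contributions $(D^i)'(\vex^i_\phi)'$ and the columns $D^i_j$ are then grouped into absolute $\delta$-boxes of granularity proportional to the relevant $\Delta$'s (again a parameter-bounded number of boxes), yielding a single MIP with a parameter-bounded number of integer variables solved by Kannan's algorithm; the fractional part is handled by the vertex-solution argument bounding the non-integral support by $\OO(s_D\tau)$, and the greedy rounding preserves the objective exactly and keeps the per-coordinate error at $\OO(s_D\tau)\max\{\delta_1,\delta_2\}\,b^0_j$. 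If you want to salvage your two-level scheme, you would at minimum need a parameter-bounded notion of ``type'' (as in the paper's $\delta$-boxes), exact rather than near-feasible handling of the cardinality/counting constraints, and a rounding that never touches $\vew$ or $\veu$.
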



In particular, the running time is $2^{{2^{{({s_D}/{\varepsilon})^{\OO(s_D)}\cdot ({s_At}/{\varepsilon})^{\OO(t)}}}}}\cdot \textnormal{poly}(|I|)$.


Theorem~\ref{conj-coro:nfold-additive} is a bit weak in the sense that it requires all the entries of the submatrices $A^i,D^i$ as well as the variables to be nonnegative, and it gives an additive error of $\varepsilon\veb^i$ instead of $\varepsilon \Delta$. In many combinatorial optimization problems where $n$-fold IPs are applicable, all the input parameters are indeed nonnegative and $(1+\varepsilon)\veb^i$ yields a standard multiplicative $(1+\varepsilon)$-factor. Nevertheless, we show that it is possible to get rid of such a weakness if the local constraint $A^i\vex^i= \veb^i$ has few solutions, as implied by the following Theorem~\ref{conj:nfold-additive}.



\begin{theorem}\label{conj:nfold-additive}
Given is an integer program  $\min\{\vew\vex: \sum_{i=1}^nD^i\vex^i=\veb^0, \vex^i\in\mathcal{P}^i, 1\le i\le n,\, \vex\in \Z^{nt} \}$ with optimal objective value $\OPT$, where $D^i\in\Q^{s\times t}$ and $\mathcal{P}^i$ is an arbitrary set of integer vectors.  Let  $\kappa=\max_{\veu\in \cup_{i=1}^n\mathcal{P}^i}\|\veu\|_\infty$, i.e., $\kappa$ is the largest $\ell_\infty$-norm among all integer vectors in $\cup_{i=1}^n\mathcal{P}^i$. 
	Then for arbitrary small
$\varepsilon>0$, there exists an algorithm of running time ${f(s,t,\kappa,\varepsilon)}\cdot\textnormal{poly}(|I|)$ which returns a near-feasible solution $\tilde{\vex}$ such that $\tilde{\vex}\in\{\vex: \|\sum_{i=1}^nD^i\vex^i-\veb^0\|_{\infty}\le \varepsilon\Delta, \,\vex^i\in\mathcal{P}^i,1\le i\le n,\, \vex\in \Z^{nt} \}$, and $\vew\tilde{\vex}\le \OPT$, where $\Delta=\max_{i\in [n]}\|D^i\|_{\infty}$. 
\end{theorem}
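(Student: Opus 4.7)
The plan is to reduce this theorem to Theorem~\ref{conj:IP-general} (the approximation algorithm for IPs with few constraints) via a configuration reformulation of each block combined with a type-based aggregation of blocks. First I would use the hypothesis that each $\veu \in \bigcup_i \mathcal{P}^i$ satisfies $\|\veu\|_\infty \le \kappa$ to bound $|\mathcal{P}^i| \le K := (2\kappa+1)^t$, enumerate $\mathcal{P}^i = \{\vex^{i,1},\ldots,\vex^{i,K_i}\}$, and introduce binary indicator variables $y_{i,k}$ with $\sum_k y_{i,k}=1$ encoding the choice of configuration per block. The global constraint rewrites as $\sum_{i,k}(D^i\vex^{i,k})y_{i,k}=\veb^0$ and the objective as $\sum_{i,k}(\vew^i\vex^{i,k})y_{i,k}$. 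The resulting IP has $n+s$ constraints, at most $nK$ binary variables, and entries bounded in absolute value by $t\kappa\Delta$; a direct application of Theorem~\ref{conj:IP-general} with $m=n+s$ would be exponential in $n$, so an aggregation step is needed.

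Next I would aggregate blocks into types so that the aggregated IP has only $T+s$ constraints with $T$ depending solely on $s,t,\kappa,\varepsilon$. Define the type of block $i$ by the rounded profile $\{\widetilde{D^i\vex^{i,k}}\}_{k}$, where each contribution vector is snapped to a grid of spacing $\Theta(\varepsilon\Delta)$; since $\|D^i\vex^{i,k}\|_\infty\le t\kappa\Delta$, each rounded contribution lies in a grid of size $(t\kappa/\varepsilon)^{O(s)}$ and each profile has at most $K$ entries, so the number of distinct types is bounded by $T=f_1(s,t,\kappa,\varepsilon)$. The aggregated IP has variables $z_{\tau,k}$ counting the type-$\tau$ blocks assigned to configuration $k$, local balance constraints $\sum_k z_{\tau,k}=n_\tau$, and the global constraint $\sum_{\tau,k}(D^\tau\vex^{\tau,k})z_{\tau,k}=\veb^0$ using a type representative $D^\tau$. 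Theorem~\ref{conj:IP-general} applied to this IP (with $T+s$ constraints) yields the claimed $f(s,t,\kappa,\varepsilon)\cdot\textnormal{poly}(|I|)$ running time. Since costs within a type may differ across blocks and the requirement $\vew\tilde{\vex}\le\OPT$ leaves no slack for cost rounding, I would handle costs through a within-type min-cost bipartite matching subroutine that assigns the cheapest feasible blocks to each chosen configuration after the aggregated counts are fixed.

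The main obstacle is preventing per-block rounding error from accumulating on the global constraint: naively, rounding $n$ contributions each to within $\Theta(\varepsilon\Delta)$ leaves a total deviation of $\Theta(n\varepsilon\Delta)$, far exceeding the $\varepsilon\Delta$ budget. My plan is to use rounding only to define the equivalence classes that form the types, never to replace actual contributions in the constraint. After solving the aggregated IP, an ``unpacking'' step must assign each block $i$ to a specific $\vex\in\mathcal{P}^i$ so that the actual global sum $\sum_i D^i\vex^i$ matches $\veb^0$ within $\varepsilon\Delta$. This reconciliation would rely on a proximity/Graver-style local-swap argument within each type whose magnitude is bounded by a function of $s$ and $K$ rather than $n$, translating the aggregated solution back to an integer solution of the original instance with the required accuracy.
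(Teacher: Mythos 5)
Your reduction breaks at the aggregation step, and the proposed repair is not actually a repair. In your aggregated IP the global constraint is written with type representatives, $\sum_{\tau,k}(D^\tau\vex^{\tau,k})z_{\tau,k}=\veb^0$, so when you later ``unpack'' the counts $z_{\tau,k}$ into an assignment of actual blocks, the true sum $\sum_i D^i\vex^{i,k(i)}$ can deviate from the representative sum by up to (grid spacing)$\times n$ in every coordinate: all blocks of a type may deviate from their representative in the same direction, and since the counts per configuration are fixed, no within-type reassignment, min-cost matching, or Graver/proximity-style swap can cancel this drift --- it is invariant under permuting which block of the type receives which configuration. So with spacing $\Theta(\varepsilon\Delta)$ the final violation is $\Theta(n\varepsilon\Delta)$, and making the spacing $O(\varepsilon\Delta/n)$ destroys the bound $T=f(s,t,\kappa,\varepsilon)$ (and hence the running time, since you invoke Theorem~\ref{conj:IP-general} with $m=T+s$). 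Your stated fix, ``use rounding only to define the equivalence classes, never to replace actual contributions,'' contradicts the aggregated constraint you wrote down, and you give no alternative formulation in which the actual contributions appear; the last paragraph is a placeholder, not an argument.

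What is missing is precisely the paper's mechanism for keeping the representative error off the bulk of the solution. The paper does not aggregate into a pure counting IP: it keeps a per-block indicator $z_{ij}$ for every block and configuration, writes each contribution as canonical vector plus residue ($\mathcal{D}^i=\mathcal{V}^i+\tilde{\mathcal{D}}^i$ with $\|\tilde{\mathcal{D}}^i\|_\infty\le\delta\Delta\kappa t$), attaches the canonical parts to integer group variables $y_{kj}=\sum_{i\in I_k}z_{ij}$ and the small residues to the (relaxed, fractional) $z_{ij}$, and solves the resulting MIP exactly in the integer variables via Kannan. The error then comes only from rounding the fractional $z_{ij}$'s, and the heart of the proof --- which your proposal has no counterpart for --- is a structural vertex-solution argument: the LP obtained after fixing the integer variables splits into a rank-$s$ global part and a block-diagonal, totally unimodular assignment part, and a rank--nullity/kernel-combination argument shows a vertex has only $O(s\kappa^t)$ fractional entries (independent of $n$). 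Greedy rounding of those few variables preserves the assignment constraints and the objective and perturbs the global constraint by $O(s\kappa^t\cdot\delta\Delta\kappa t)\le\varepsilon\Delta$ for $\delta=O\bigl(\varepsilon/(st\kappa^{t+1})\bigr)$. Without an analogue of this ``few fractional variables at a vertex'' lemma (or some other $n$-independent error control tied to actual, not representative, contributions), your plan does not deliver the $\varepsilon\Delta$ additive guarantee. The cost-handling via within-type matching is also unnecessary once per-block variables are retained, since the true costs then appear directly in the objective and monotone rounding suffices.
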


In particular, the running time is $2^{(\frac{s\kappa^t}{\varepsilon})^{\OO(s\kappa^t)}}\cdot \textnormal{poly}(|I|)$.

Theorem~\ref{conj-coro:nfold-additive} and Theorem~\ref{conj:nfold-additive} can be viewed as FPT approximation schemes for $n$-fold IPs with different parameters. 
One application is to provide additive approximation schemes for the unrelated machine scheduling problem $Rm||C_{\max}$: 
Given are $n$ jobs and $m$ machines, with the processing time of job $i$ on machine $h$ as $p_{ih}$. The goal is to assign jobs to machines such that the makespan is at most some target value $C_{\max}$. Let $x^i_h\in\{0,1\}$ denote whether job $i$ is assigned to machine $h$, i.e., if job $i$ is assigned to machine $h$, then $x^i_h=1$; otherwise, $x^i_h=0$. Then the natural IP formulation is the following  feasibility test:
 \begin{eqnarray*}
 &\Big\{\vex: \sum_{i=1}^np_{ih}x_{h}^i\le C_{\max}, \, \forall 1\le h\le m, \,\vex^i\in\mathcal{P}^i,1\le i\le n,\, \vex\in \Z^{nt} \Big\}, 
 \end{eqnarray*}
 where polytope $\mathcal{P}^i\coloneqq\{\vex^i:x^i_1+x^i_2+\cdots+x^i_m=1, x^i_h\in\{0,1\}\}$ implies that exactly one $x^i_h$ is 1. Observe that $D^i\in \Z^{m\times m}$ and $\kappa=1$, so Theorem~\ref{conj:nfold-additive} implies a PTAS of running time $f(m,\varepsilon)\cdot n^{\OO(1)}$ for finding a feasible schedule $\tilde{\vex}$ such that the makespan is at most $C_{\max}+\varepsilon \cdot\max_{i\in[n],h\in[m]}p_{ih}$.  In comparison, the current best-known PTAS for unrelated machine scheduling runs in $2^{\OO(m\log(m/\delta))}$-time~\cite{jansen2010scheduling}, and returns a solution with makespan $(1+\delta)C_{\max}$. Using this algorithm to derive an additive PTAS would require setting $\delta=\OO(\varepsilon/n)$, yielding a non-polynomial running time. 
 
 Theorem~\ref{conj:nfold-additive} can also be used to obtain PTASes for other variants of scheduling problems. Consider the generalized assignment problem, where the input is the same as the above-mentioned unrelated machine scheduling problem, except that there is a cost $c_{ih}$ for scheduling job $i$ on machine $h$.  The goal is a bi-criteria study on the makespan $C_{\max}$ and the total scheduling cost $T$. Taking the total scheduling cost as a constraint, a classic result of Shmoys and Tardos~\cite{shmoys1993approximation} gave a 2-approximation algorithm.  
 Angel et al.~\cite{angel2001fptas} proposed a PTAS for $Rm|c_{ih}|C_{\max}$ of running time $\OO(n(n/\varepsilon)^{m})$. Very recently Li et al.~\cite{weidong} showed a PTAS for identical machine scheduling $Pm|c_{ih}|C_{\max}$ of an improved running time $n(m/\varepsilon)^{\OO(m)}$, which is an EPTAS when taking $m$ as a parameter. Using Theorem~\ref{conj:nfold-additive}, an EPTAS for unrelated machines $Rm|c_{ih}|C_{\max}$ of running time $2^{(\frac{m}{\varepsilon})^{\OO(m)}}\cdot n^{\OO(1)}$ directly follows.
It is worth mentioning that researchers have studied the bi-objective for various other models (e.g., different machine environments, specific cost functions like $c_{ih}=a_i\cdot b_h$ or $c_{ih}=a_i+b_h$, etc.)~\cite{li2024approximation,lee2014fast}, and we believe Theorem~\ref{conj:nfold-additive} may also be applicable to some of them.


\subsection{Technical overview}  We first briefly discuss our technique for proving Theorem~\ref{conj:IP-general}.
Let $H=(\veh_1,\veh_2,\dots,\veh_n)$, whereas $H\vex=\sum_{j=1}^n\veh_jx_j$. Observe that $\veh_j\in [-\Delta,\Delta]^m$. If $n$ is small, say $n=m^{\OO(1)}$, then Lenstra's algorithm~\cite{lenstra1983integer} works directly and solves the IP in $n^{\OO(n)}=2^{m^{\OO(1)}}$ time, concluding Theorem~\ref{conj:IP-general}. So we may assume that $n$ is sufficiently large, which means there are sufficiently many $\veh_j$'s within the box $[-\Delta,\Delta]^m$. Consequently, many $\veh_j$'s are very similar to each other in the sense that their difference lies in a small hypercube of side length $\OO(\varepsilon \Delta) $. Now suppose $\veh_1$ and $\veh_2$ are similar. To approximately solve IP~\eqref{ILP:1}, we observe that it is not necessary to keep two integer variables $x_1$ and $x_2$ for $\veh_1$ and $\veh_2$ respectively. Instead, we may introduce a hyper-integer variable $y$ that represents $x_1+x_2$, and relax $x_1$ and $x_2$ to fractional variables. By doing so, we transform the original IP to a mixed integer program (MIP) with a constant number of integer variables, which is solvable in polynomial time. Then we round fractional variables to integer variables by maintaining the overall error incurred. The challenge here is that, to bound the overall error, we need to make sure that among all the fractional variables, only a few of them can take fractional values. Interestingly, while these fractional variables need to satisfy a large number of constraints, the specific structure of the constraint matrix allows us to conclude that very few variables may take a fractional value in a vertex solution. Towards showing this, we introduce some necessary notions and lemmas regarding vertex solutions in preliminaries.  

The proof strategy for Theorem~\ref{conj-coro:nfold-additive} can be viewed as a combination of that for Theorem~\ref{conj:IP-general} and Theorem~\ref{conj:nfold-additive}, so we briefly discuss our technique for proving Theorem~\ref{conj:nfold-additive}. For simplicity, let us assume that $\mathcal{P}^i=\mathcal{P}=\{\vep_1,\vep_2,\dots,\vep_{\tau}\}$ for all $i$, where $\tau\coloneqq\max_{i\in [n]}|\mathcal{P}^i|=\kappa^t$. Hence, we see that the vector $D^i\vex^i$ is essentially one column out of the precomputed matrix $\mathcal{D}^i:=\{D^i\vep_1,D^i\vep_2,\dots,D^i\vep_{\tau}\}$. 
If $D^i$ and $D^j$ (and hence $\mathcal{D}^i$ and $\mathcal{D}^j$) are close to each other, then we do not want to have separate integer variables $\vex^i$ and $\vex^j$ for them, instead, we want to introduce a hyper-integer variable $\vey$ representing $\vex^i+\vex^j$, and relax $\vex^i$ and $\vex^j$ to fractional variables. It turns out that we need to go a bit further: We need $\tau$ hyper-integer variables $\vey_1$ to $\vey_{\tau}$ since there are $\tau$ choices out of $\mathcal{P}^i$. Going a bit further does not affect much when we transform the original IP into an MIP with a constant number of integer variables; however, it causes the established MIP to contain way more constraints. Our next goal is still to argue that in a vertex solution of the LP (induced by the MIP by fixing the integer variables), only a few variables may take a fractional value. To achieve this, we observe that the constraint matrix of the LP can be decomposed into two parts (two submatrices), one having a small rank, and the other having a large rank but also having a nice diagonal block structure. We show that the property of the vertex solution to such a two-part LP can be studied by arguing on the two parts separately.   

\subsubsection{Comparison to some relevant prior works.}~\label{subsec:compare} Our algorithmic results on block-structured IPs (Theorem~\ref{conj-coro:nfold-additive} and~\ref{conj:nfold-additive}) are closely related to their FPT algorithms. In addition to $n$-fold IPs, researchers have also studied FPT algorithms for 2-stage stochastic IPs and 4-block $n$-fold IPs, whose constraint matrices can be written as:
\begin{eqnarray*}
{H_{\textnormal{2-stage}}}:=
\begin{pmatrix}
B^1 & A^1 & 0  &   & 0  \\
B^2 & 0  & A^2 &   & 0  \\
\vdots &   &   & \ddots &   \\
B^n & 0  & 0  &   & A^n
\end{pmatrix}
, \enspace
 {H_{\textnormal{4-block}}}:=
\begin{pmatrix}
C & D^1 & D^2 & \cdots & D^n \\
B^1 & A^1 & 0  &   & 0  \\
B^2 & 0  & A^2 &   & 0  \\
\vdots &   &   & \ddots &   \\
B^n & 0  & 0  &   & A^n
\end{pmatrix}
.
\end{eqnarray*}
These FPT algorithms typically find an exact solution within a running time of $f(\Delta)\cdot\textnormal{poly}(|I|)$ for some computable function $f$ (see Section~\ref{subsec:related} for detailed related works). By taking $\varepsilon$ to be sufficiently small (e.g., $\varepsilon<1/\Delta$ in Theorem~\ref{conj:IP-general} and Theorem~\ref{conj:nfold-additive}), our approximation algorithms can return an exact solution, albeit that they may have a larger running time compared with the existing FPT algorithms. The merit of our approximation algorithms is that they offer a more flexible control over the running time vs. accuracy. 

Very recently, Eisenbrand and Rothvoss~\cite{eisenbrand2025parameterized} studied $4$-block $n$-fold IPs and obtained a result of a similar flavor. They showed that, in time $f(\max_i\|A^i\|_\infty)\cdot \textnormal{poly}(|I|)$, one can find a solution $(\vex^0,\vex^1,\dots,\vex^n)$ whose objective value is no worse than $\OPT$, and which satisfies all constraints except the first one, i.e., $C\vex^0+\sum_{i=1}^nD^i\vex^i\neq \veb^0$. In particular,  the error can be bounded as $\|C\vex^0+\sum_{i=1}^nD^i\vex^i-\veb^0\|_\infty\le g(\max_i\|D^i\|_\infty)$ for some computable function $g$. As $n$-fold IPs are special cases of $4$-block $n$-fold IPs, such a result also holds for $n$-fold IPs. Such a result is parallel to our work since the running time of the algorithm provided by Eisenbrand and Rothvoss depends on $\Delta$, and its error can be way larger than $\Delta$. 

In terms of techniques, our algorithms need to round a fractional solution to an integral solution. During the rounding procedure, we will need to reduce the number of variables that take a fractional value, and this resembles the work of Cslovjecsek et al.~\cite{cslovjecsek2021block}. In particular, Cslovjecsek et al. considered an LP of the form $A_1\vex^1+\cdots+ A_n\vex^n= \veb$, where $\vex^i\in Q_i$ and each $Q_i\subseteq \R^{t_i}_{\ge 0}$ is an integral polyhedron. They showed that in a vertex solution, all but $m$ of the $\vex^i$'s are vertices of the $Q_i$'s, respectively, and thus all but $m$ of the $\vex^i$'s are integral, where $m$ is the number of rows of $A_i$'s. Their result applies directly to $n$-fold IPs where each $A_i$ is a small submatrix of fixed size. Our rounding procedure also deals with an LP of the form $A_1\vex^1+\cdots+ A_n\vex^n= \veb$; however, each $A_i$ will have a huge size. The huge size is mainly due to a configuration-LP type of reformulation of the input IP. While the result of Cslovjecsek et al. implies that in a vertex solution only a few $\vex^i$'s are integral, one $\vex^i$ may still contain too many fractional variables. We will need to leverage the special structure of the $A_i$'s to reduce the number of fractional variables (See Section~\ref{sec:3.2} for details). Overall, while we are following the general framework of arguing on the number of fractional variables in a vertex solution to a specific $n$-fold-like LP, we are not aware of a prior result that is applicable directly.

\subsection{Related work}\label{subsec:related}
The approximation algorithms of IPs are rare in the literature. Dadush~\cite{dadush2014randomized} studied \emph{approximate integer programming} in 2014. Later, Dadush et al.~\cite{dadush2024approximate} investigated how to reduce the exact integer programming
problem to the approximate version. In addition,
Shmoys and Swamy~\cite{shmoys2004stochastic} showed that one could
derive approximation algorithms for most of the stochastic IPs considered in~\cite{dye2003stochastic, gupta2004boosted,immorlica2004costs, ravi2006hedging} by adopting a natural LP rounding approach. Swamy and Shmoys~\cite{swamy2005sampling} gave the first approximation algorithms for a variety of $k$-stage generalizations of basic combinatorial optimization problems including the set cover, vertex cover, multicut on trees, facility location, and multicommodity flow problems. 

We give additive approximation schemes for IPs, which yield additive approximation schemes for several scheduling problems. While most of the existing approximation schemes for scheduling problems give a multiplicative factor of $1+\varepsilon$, additive approximation schemes have been studied very recently by Buchem et al.~\cite{buchem2021additive}, who presented an algorithm for $Pm||C_{\max}$ with a running time of $m^2n^{\OO(1/\varepsilon)}$ 
that computes a solution with makespan at most $\OPT + \varepsilon\cdot \max_{j\in[n]}p_{j}$. It is interesting to study additive approximation schemes for other classic scheduling problems.

In addition to scheduling problems, $n$-fold IPs have also been used to model a variety of other combinatorial optimization problems, based on which FPT algorithms are obtained; see, e.g.,~\cite{knop2020combinatorial,hemmecke2013n,de2008n,hemmecke2013n,hemmecke2011n,knop2019multitype,onn2011theory,knop2023high,knop2018scheduling,jansen2021total,jansen2022empowering}. We believe that the study of approximation schemes for $n$-fold type of IPs can also facilitate the design of approximation schemes for relevant problems.

FPT algorithms for IPs with respect to other structure parameters have also been studied extensively, such as 2-stage stochastic IPs~\cite{aschenbrenner2007finiteness,cslovjecsek2021efficient,eisenbrand2019algorithmic,jansen2021double,klein2021complexity,koutecky2018parameterized,hemmecke2003decomposition} and $4$-block $n$-fold IPs~\cite{hemmecke2014graver,chen2020new,chen2022blockstructured,chen2024fpt,oertel2024colorful}, as well as other structured IPs~\cite{brand2021parameterized,eiben2019integer,chan2022matrices,eisenbrand2025parameterized}. 
Particularly, Cslovjecsek et al.~\cite{cslovjecsek2024parameterized} proved that the feasibility problems for 2-stage stochastic IPs can be solved in time $f(\gamma_1,\max_i\|A^i\|_\infty)\cdot |I|$ for some computable function $f$, where $\gamma_1$ is an upper bound of the dimensions of all blocks $A^i,B^i$. They also demonstrated that the optimization problems for $n$-fold IPs with $D^1=D^2=\cdots=D^n$ can be solved in time $f(\gamma_2,\max_i\|A^i\|_\infty)\cdot \textnormal{poly}(|I|)$, where $\gamma_2$ is an upper bound of the dimensions of all blocks $A^i,D^i$. 
Very recently, Eisenbrand and Rothvoss~\cite{eisenbrand2025parameterized} solved an open problem raised by Cslovjecsek et al.~\cite{cslovjecsek2024parameterized} and showed that the optimization problems for 2-stage stochastic IPs can be solved in time $f(\gamma_1,\max_i\|A^i\|_\infty)\cdot |I|$.

It is worth mentioning that block-structured IPs with fractional variables have also been explored in the literature. In particular, Brand et al.~\cite{brand2021parameterized} introduced the concept of {\it fractionality},  which denotes the minimum over the maxima of denominators in optimal solutions of an MILP instance. They showed that the fractionality of $n$-fold and 2-stage stochastic mixed-IPs is FPT-bounded.

\subsection{Preliminaries}

\noindent\textbf{Notations.} We write column vectors in boldface, e.g., $\vex, \vey$, and their entries in normal font, e.g., $x_j, y_j$. Let $[i]\coloneqq\set{1,2,\dots,i}$. For a vector or a matrix, let $\|\cdot\|_{\infty}$ denote the maximum absolute value of its elements. 
Let $\textnormal{poly}(|I|)$ denote a polynomial in the input length $|I|$, $f$ be a computable function, and \OPT\, denote the optimal objective value. 

The support of a vector $\vex$ is the set of indices $I = \supp(\vex)$ such that $x_j = 0 \iff j \notin I$.
Let $V$ and $W$ be vector spaces and $\phi \colon V \to W$.
The kernel of $\phi$, denoted $\ker(\phi)$, is the set of vectors mapping to zero, i.e., 
$\ker(\phi) = \set { x \in V : f(x) = \ve0}$.
The nullity of $\phi$, denoted $\nullity(\phi)$, is the dimension of its kernel, that is, the dimension
of the smallest dimensional vector space containing $\ker(\phi)$.
The image of $\phi$
is defined as $\img(\phi) = \set{ f(x) : x \in V} \subseteq W$.
The rank of $\phi$, denoted $\rank(\phi)$, is, analogously to the nullity,
the dimension of the smallest dimensional vector space containing $\img(\phi)$.
If $\phi$ is a linear map, the definitions of rank and nullity simplify to
\[
	\nullity(\phi) = \dim(\ker(\phi)), \qquad \rank(\phi) = \dim(\img(\phi)) .
\]
The definitions are also used for matrices as a matrix $A \in \R^{m \times n}$ represents the linear map $\phi \colon \R^n \to \R^m,\, \vex \mapsto A\vex$.
The kernel of a matrix $A \in \R^{m \times n}$ is $\ker(A)=\set{ \vex \in \R^n : A\vex = \ve0}$, the nullity is its dimension,
the image is $\set{ A\vex \in \R^m : \vex \in \R^n }$ and the rank is its dimension. The latter coincides with the alternative rank
definition of matrices as the largest number $k$ such that any $k$ columns of $A$ are linearly
independent.

We make use of the well-known rank-nullity theorem, stating that a linear map splits the
dimension of the domain into the dimension of the kernel and the dimension of the image (cf. the splitting lemma).
\begin{theorem}[Rank-Nullity Theorem] \label{stmt:rank_nullity}
	Let $V,W$ be vector spaces, where $\dim(V) < \infty$, and $\phi \colon V \to W$ be a linear map. Then
	\[ \dim(V) = \nullity(\phi) + \rank(\phi) . \]
\end{theorem}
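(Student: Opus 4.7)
The plan is to prove this standard linear-algebra fact by constructing a basis of $V$ that is compatible with the decomposition into $\ker(\phi)$ and a complement, and then showing the complement is mapped bijectively to $\img(\phi)$. Since $\dim(V)<\infty$, both $\nullity(\phi)$ and $\rank(\phi)$ are finite, so all the bases involved are finite and standard extension arguments apply.

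First, I would let $k=\nullity(\phi)$ and fix a basis $\{v_1,\dots,v_k\}$ of $\ker(\phi)\subseteq V$. Using the fact that any linearly independent set in a finite-dimensional vector space can be extended to a basis, I would extend this to a basis $\{v_1,\dots,v_k,v_{k+1},\dots,v_n\}$ of $V$, where $n=\dim(V)$. The goal is then to show that $B\coloneqq\{\phi(v_{k+1}),\dots,\phi(v_n)\}$ is a basis of $\img(\phi)$, which would immediately give $\rank(\phi)=n-k$ and hence $\dim(V)=n=k+(n-k)=\nullity(\phi)+\rank(\phi)$.

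To show $B$ spans $\img(\phi)$, I would take any $w\in\img(\phi)$, write $w=\phi(v)$ for some $v\in V$, expand $v=\sum_{i=1}^{n}\alpha_i v_i$ in the chosen basis, and use linearity together with $\phi(v_i)=\ve0$ for $i\le k$ to conclude $w=\sum_{i=k+1}^{n}\alpha_i\phi(v_i)\in\vspan(B)$. To show $B$ is linearly independent, I would suppose $\sum_{i=k+1}^{n}\beta_i\phi(v_i)=\ve0$, which by linearity means $\phi\bigl(\sum_{i=k+1}^{n}\beta_i v_i\bigr)=\ve0$, so $\sum_{i=k+1}^{n}\beta_i v_i\in\ker(\phi)$ and can be written as $\sum_{i=1}^{k}\gamma_i v_i$. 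Rearranging gives a linear dependence among $\{v_1,\dots,v_n\}$, which forces all $\beta_i=0$ (and all $\gamma_i=0$) because this set is a basis of $V$.

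The main obstacle is really only notational rather than mathematical: one must be careful that the basis extension step actually works, which in turn uses that $\dim(V)<\infty$ so that $\ker(\phi)$ is itself finite-dimensional and admits a finite basis that can be extended. Once this is in place, the argument is routine bookkeeping with linear combinations, and no additional ideas are required. Since the statement is invoked later only for matrices $A\in\R^{m\times n}$ viewed as linear maps $\vex\mapsto A\vex$, this abstract version suffices for all downstream uses in the paper.
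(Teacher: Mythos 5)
Your proof is correct: it is the standard textbook argument (basis of $\ker(\phi)$, extend to a basis of $V$, show the images of the added vectors form a basis of $\img(\phi)$), and every step is sound, including the use of $\dim(V)<\infty$ to justify the basis extension. The paper itself offers no proof of this statement --- it is invoked as the well-known Rank--Nullity Theorem --- so there is nothing to compare against; your argument fully suffices for the way the theorem is used later (for matrices $A\in\R^{m\times n}$ viewed as linear maps $\vex\mapsto A\vex$).
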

We are generally interested in the number of variables in an LP that attain non-integral
values. Thus, analogously to the support of a vector, we define the non-integral support.
\begin{definition}[Non-Integral Support]
\[ \nisupp(\vex) \coloneqq \set { j \in \supp(\vex) : x_j \notin \Z }. \]
\end{definition}

We will need the following proposition, which generalizes the fact that for a system $A\vex = \veb$, $\vex \geq 0$, the support of a vertex solution consists of linearly independent columns, to systems with (additional) lower and upper bounds on the variables:
\begin{proposition}[{Proposition 3.3.3, \cite{bertsekas2003convex}}] \label{stmt:bfslu}
	Let $\vex^*$ be a vertex solution to an LP of the form $A\vex = \veb$, $\vel \leq \vex \leq \veu$, $\vel,\veu \in \Q^n$.
	Then $(A_j)_{j \in \supp(\vex^*) : l_j < x_j < u_j}$ is non-singular.
\end{proposition}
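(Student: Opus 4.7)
The plan is to proceed by contradiction using the standard geometric characterization of a vertex: $\vex^*$ is a vertex of the polyhedron $P \coloneqq \set{ \vex : A\vex = \veb,\, \vel \le \vex \le \veu }$ if and only if there is no nonzero direction $\ved \in \R^n$ such that both $\vex^* + \varepsilon \ved$ and $\vex^* - \varepsilon \ved$ lie in $P$ for all sufficiently small $\varepsilon > 0$. Starting from this, I would set $J \coloneqq \set{ j \in \supp(\vex^*) : l_j < x^*_j < u_j }$ and assume, toward a contradiction, that the family of columns $(A_j)_{j \in J}$ is linearly dependent.

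The linear dependence produces a nonzero $\ved \in \R^n$ with $\supp(\ved) \subseteq J$ and $A\ved = \ve0$. I claim this $\ved$ is a valid perturbation direction that contradicts the vertex property. The equality constraints persist, since $A(\vex^* \pm \varepsilon \ved) = A\vex^* \pm \varepsilon A\ved = \veb$. For the box constraints: when $j \notin J$ we have $d_j = 0$, so the inequalities $l_j \le x^*_j \le u_j$ are untouched; when $j \in J$, the strict inequalities $l_j < x^*_j < u_j$ give slack, so choosing
\[
  \varepsilon \;<\; \min_{j \in J,\, d_j \neq 0} \frac{\min\set{x^*_j - l_j,\, u_j - x^*_j}}{|d_j|}
\]
ensures $\vel \le \vex^* \pm \varepsilon \ved \le \veu$. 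Hence both $\vex^* + \varepsilon \ved$ and $\vex^* - \varepsilon \ved$ lie in $P$, and the identity $\vex^* = \tfrac{1}{2}(\vex^* + \varepsilon \ved) + \tfrac{1}{2}(\vex^* - \varepsilon \ved)$ expresses $\vex^*$ as a strict convex combination of two distinct points of $P$, contradicting the assumed vertex status.

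The argument is elementary, and the only subtlety is keeping the case split straight: indices outside $J$ are safe because $\ved$ vanishes there, while indices inside $J$ are safe because $\vex^*$ sits strictly inside the box in those coordinates. In fact, the same reasoning yields the slightly stronger conclusion that the columns indexed by \emph{all} strictly interior coordinates (with or without the support restriction) are linearly independent; the support restriction in the statement is natural for how the proposition is invoked later. No tool beyond the basic vertex characterization is needed, so I do not anticipate any real obstacle.
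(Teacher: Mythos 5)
Your argument is correct: the paper does not prove this proposition at all but simply cites it from Bertsekas, and your perturbation argument (a kernel direction supported on the strictly-interior support coordinates yields $\vex^* \pm \varepsilon\ved \in P$, contradicting extremality) is exactly the standard proof of that cited fact, establishing that the columns in question are linearly independent, which is what the paper means by ``non-singular'' here. Your closing observation that the support restriction is not needed for the argument is also accurate.
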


Given that a vertex solution can be computed in polynomial time, the following lemma is straightforward via Proposition~\ref{stmt:bfslu}.

\begin{lemma} \label{stmt:nisupprank_general}
	Consider an LP with the objective of minimizing or maximizing $\vew\vex$ subject to $\set{\vex: A\vex=\veb, \, \vel\le \vex\le \veu,\, \vel,\veu\in \Z^{n} }$. In polynomial time we can compute an optimal solution $\vex^*$ such that 
    $(A_j)_{j \in \supp(\vex^*) : l_j < x_j < u_j}$ is non-singular.
\end{lemma}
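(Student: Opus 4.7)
The plan is to reduce the lemma directly to Proposition~\ref{stmt:bfslu} by producing an optimal \emph{vertex} solution of the LP in polynomial time. First, I would observe that since $\vel,\veu\in\Z^n$ are finite, the feasible region $\set{\vex:A\vex=\veb,\,\vel\le\vex\le\veu}$ is a bounded rational polyhedron. Hence, whenever the LP is feasible, its optimum is attained, and moreover it is attained at a vertex of the feasible polyhedron (any optimal face of a bounded polyhedron contains a vertex).

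Next, I would invoke a standard polynomial-time LP algorithm that returns a basic/vertex optimal solution. For instance, one can run the ellipsoid method or an interior-point method to obtain some optimal solution in polynomial time in $|I|$, and then apply the classical polynomial-time vertex-recovery procedure: while the current optimal solution is not a vertex, one identifies a direction in the lineality of the optimal face and moves along it until a new bound becomes tight, reducing the dimension of the active face by one; after at most $n$ such steps the procedure terminates at an optimal vertex $\vex^*$. This is a well-known fact about solving rational LPs (see, e.g., Grötschel--Lovász--Schrijver).

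Finally, I would apply Proposition~\ref{stmt:bfslu} to the computed vertex solution $\vex^*$. Since $\vex^*$ is a vertex of $\set{\vex:A\vex=\veb,\,\vel\le\vex\le\veu}$ with $\vel,\veu\in\Z^n\subseteq\Q^n$, the proposition yields that $(A_j)_{j \in \supp(\vex^*) : l_j < x_j < u_j}$ is non-singular, which is exactly the desired conclusion.

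There is essentially no obstacle here: the lemma is a straightforward combination of Proposition~\ref{stmt:bfslu} with the well-known polynomial-time solvability of rational LPs together with vertex rounding. The only minor point worth stating explicitly is that the LP algorithm must return a vertex solution rather than an arbitrary optimal point, but this is guaranteed by the standard rounding step described above.
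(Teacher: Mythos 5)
Your proposal is correct and follows essentially the same route as the paper, which simply notes that an optimal vertex solution can be computed in polynomial time and then applies Proposition~\ref{stmt:bfslu} to it; since $\vel,\veu\in\Z^n$ make the feasible region a polytope, your vertex-recovery argument fills in exactly the standard details the paper leaves implicit.
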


\section{Approximation algorithms for IPs with a constant number of constraints - Proof of Theorem~\ref{conj:IP-general}}\label{sec:approx}

This section is devoted to the proof of Theorem~\ref{conj:IP-general}. We follow the general method we described in \lq\lq Technical overview\rq\rq. The whole proof is divided into three steps.

 \paragraph*{Step 1: Transform IP~\eqref{ILP:1} to a mixed IP with $\OO(1)$ integer variables.}

Recall that $H=(\veh_1,\veh_2,\dots,\veh_n)$ where $\veh_j\in \interval {-\Delta} {\Delta}^m$. We cut $\interval {-\Delta} {\Delta}^m$ into $(2/\delta)^m$ small boxes (called $\delta$-boxes), $\prod_{i=1}^m \interval {(\lambda_i-1)\delta\Delta} {\lambda_i\delta\Delta}$, where $\delta>0$ is an arbitrary small constant and $\lambda_i\in\{-1/\delta+1,-1/\delta+2,\dots,1/\delta\}$.  We arbitrarily index all the $\delta$-boxes, and let $I_k$ be the set of indices $j$'s such that $\veh_j$ is in the $k$-th $\delta$-box, where $k=1,\ldots, (2/\delta)^m$. Note that if some $\veh_j$ belongs to multiple $\delta$-boxes, let it be in arbitrary one such $\delta$-box.
 For the $k$-th $\delta$-box, we pick one fixed vector $\vev_k$ and call it the {\it canonical vector}. For each $j\in I_k$, we define $\tilde{\veh}_j=\veh_j-\vev_k$, so it is clear that $\|\tilde{\veh}_j\|_{\infty}\le \delta\Delta$.  
 
 We say that the vectors in the same $\delta$-box are similar. By introducing a new variable $y_k$ to the $k$-th $\delta$-box, where $y_k\coloneqq\sum_{j\in I_k}x_j$, we observe that $H\vex=\sum_{j=1}^n\veh_jx_j$ can be rewritten as $\sum_{k=1}^{(2/\delta)^m}\sum_{j\in I_k}(\vev_k+\tilde{\veh}_j)x_j$, thus obtaining the following MIP:
\begin{subequations}
	\begin{eqnarray}
	(\text{MIP}_1)  \quad  \min &&  \sum_{j=1}^nw_jx_j \nonumber\\
	&& \sum_{k=1}^{(2/\delta)^m}\vev_ky_k+\sum_{k=1}^{(2/\delta)^m}\sum_{j\in I_k} \tilde{\veh}_jx_j=\veb \label{mip:1}\\
	&&\sum_{j\in I_k}x_j=y_k, \quad \forall 1\le k\le (2/\delta)^m \label{mip:2}\\
	&&l_j\le x_j\le u_j,\quad \forall j\in I_k, 1\le k\le (2/\delta)^m \nonumber\\
	&&y_k\in \Z, x_j\in \R,	\quad \forall 1\le j\le n, 1\le k\le (2/\delta)^m \nonumber
	\end{eqnarray}
\end{subequations}

Notice that $(\text{MIP}_1)$ relaxes $x_j\in\Z$ to $x_j\in\R$, and only contains $(2/\delta)^m$ integer variables $y_k$. Hence, applying Kannan's algorithm~\cite{kannan1987minkowski}, an optimal solution to $(\text{MIP}_1)$ can be computed in $(2/\delta)^{\OO(m(2/\delta)^m)}\cdot |I|$ time. 

Let $\vey^*$ and $\vex^*$ be the optimal solution to $(\text{MIP}_1)$. In the following step, we will round fractional variables such that most fractional variables take integral values, and all constraints are still satisfied. 


\paragraph*{Step 2: Obtain a feasible solution with $\OO(m)$ variables taking a fractional value.}


Towards rounding $x_j^*$'s without changing the constraints in $(\text{MIP}_1)$, it suffices to consider the following LP.
\begin{subequations}
		\begin{eqnarray}
	 		(\text{LP}_2) \quad  \min &&  \sum_{j=1}^nw_jx_j \nonumber \\
	 		&& \sum_{k=1}^{(2/\delta)^m}\sum_{j\in  I_k}\tilde{\veh}_jx_j=\sum_{k=1}^{(2/\delta)^m}\sum_{j\in  I_k}\tilde{\veh}_jx_j^* \label{LP_r:1}\\
	 		&&\sum_{j\in  I_k}x_j=\sum_{j\in  I_k}x_j^*, \quad \forall 1\le k\le (2/\delta)^m \label{LP_r:2}\\
	 		&&l_j\le x_j\le u_j,  x_j\in \R,\quad \forall j\in  [n] \nonumber
	 	\end{eqnarray}
\end{subequations}

Let $\bar{\vex}=(\bar{x}_j)_{j=1}^n$ be an optimal solution to the above $(\text{LP}_2)$ computed via \autoref{stmt:nisupprank_general}.
We will argue on the number of variables in $\bar{\vex}$ taking a fractional value. Towards this, let $\eta \coloneqq |\nisupp(\bar{\vex})|$.

It is straightforward to show that $\eta\le \OO(\delta^{-m})$. However, this is not sufficient to bound the rounding error, since in this case the error can reach $\OO(\delta^{-m})\cdot \delta \Delta=\OO(\delta^{-m+1}\Delta)$, where $\delta>0$ is an arbitrary small constant. The following claim gives a sharper bound.

\begin{claim}\label{claim:1}
$\eta\le 2m$.
\end{claim}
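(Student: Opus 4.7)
The plan is to combine two structural properties of the vertex solution $\bar{\vex}$ produced by Lemma~\ref{stmt:nisupprank_general}. First, observe that for each $j \in I_k$, the column $A_j$ of the constraint matrix of $(\text{LP}_2)$ has the block form $\binom{\tilde{\veh}_j}{\vece_k}$, where $\vece_k$ is the $k$-th standard basis vector in $\R^{(2/\delta)^m}$ coming from constraint~\eqref{LP_r:2}. Since the original bounds $l_j,u_j$ are integers, every $j \in \nisupp(\bar{\vex})$ automatically satisfies $l_j < \bar{x}_j < u_j$, and thus Lemma~\ref{stmt:nisupprank_general} guarantees that the columns $\{A_j : j \in \nisupp(\bar{\vex})\}$ are linearly independent in $\R^{m+(2/\delta)^m}$.

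Let $K$ be the number of boxes $I_k$ that contain at least one index of $\nisupp(\bar{\vex})$, and let $n_k$ be the number of non-integral indices inside $I_k$, so $\eta = \sum_k n_k$. The first key claim is that $n_k \ge 2$ whenever $n_k \ge 1$. Constraint~\eqref{LP_r:2} gives $\sum_{j \in I_k} \bar{x}_j = \sum_{j \in I_k} x_j^* = y_k^*$, which is an integer because $\vey^*$ is the integer component of the optimal solution of $(\text{MIP}_1)$. If $I_k$ contained a unique non-integral index $j$, rearranging would write $\bar{x}_j = y_k^* - \sum_{j' \in I_k \setminus \{j\}} \bar{x}_{j'}$ as an integer minus a sum of integers, contradicting $\bar{x}_j \notin \Z$. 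Consequently $K \le \eta/2$.

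The second key claim sharpens the linear-independence statement. For each non-empty box $k$, fix a representative $j_k^\star \in \nisupp(\bar{\vex}) \cap I_k$, and for every other $j \in \nisupp(\bar{\vex}) \cap I_k$ replace the column $A_j$ by $A_j - A_{j_k^\star} = \binom{\tilde{\veh}_j - \tilde{\veh}_{j_k^\star}}{\ve0}$. Since this is an invertible linear transformation of the set of columns, the resulting set remains linearly independent. The $K$ representative columns are unchanged, while the remaining $\eta - K$ ``difference'' columns now all lie in the $m$-dimensional subspace $\R^m \times \{\ve0\}$; therefore $\eta - K \le m$. Combining, $\eta \le K + m \le \eta/2 + m$, which gives $\eta \le 2m$.

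The main obstacle is avoiding the trivial bound $\eta \le m + (2/\delta)^m$ that follows directly from the rank of the full constraint matrix, since this would make the final rounding error blow up with $\delta^{-m}$. The gain comes from pairing two observations that are invisible to a plain rank count: the integrality of $\vey^*$ forces each ``active'' box to contribute at least two non-integral variables, and taking differences within a box cancels the large $\vece_k$-block, so intra-box fractionality is constrained solely by the small top block of dimension $m$.
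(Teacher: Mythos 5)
Your proof is correct and follows essentially the same route as the paper's: both rest on the two observations that the columns indexed by $\nisupp(\bar{\vex})$ are linearly independent at a vertex (via Lemma~\ref{stmt:nisupprank_general}) and that integrality of $y_k^*$ forces every active box to contain at least two fractional variables, and both culminate in the inequality $\eta \le m + \eta/2$. The only difference is bookkeeping: you cancel the $\vece_k$-blocks by subtracting a representative column within each box and count dimensions in the top $m$-dimensional block, whereas the paper deletes the zero rows of the restricted matrix and compares the number of columns with the at most $m+\eta/2$ remaining rows.
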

\begin{claimproof} 
For simplicity, let the constraints of $(\text{LP}_2)$ be $A\vex=\veb$. Let $B = (A_j)_{j \in \nisupp(\bar{\vex})}$ be the submatrix induced by $\nisupp(\bar{\vex}$). Note that by \autoref{stmt:nisupprank_general}, $B$ is non-singular.


Observe that for every $k$ we either have $|\nisupp(\bar{\vex}) \cap I_k| = 0$ or $|\nisupp(\bar{\vex}) \cap I_k| \geq 2$ due to Constraint \eqref{LP_r:2}.
If $|\nisupp(\bar{\vex}) \cap I_k| = 0$, Constraint \eqref{LP_r:2} becomes a zero-row in $B$.
Let $C$ be the matrix $B$ after removing all zero-rows.
Then $C$ is non-singular (as we only removed zero rows from $B$). Moreover, since $I_k$'s are disjoint, there are at most $\eta/2$ constraints in Constraint \eqref{LP_r:2} which do not become a zero-row. Hence, there are at most $m + \frac \eta 2$ constraints and $\eta$ columns in $C$.
If $C$ has more columns than rows, then $C$ is singular, which is a contradiction.
So we have $m + \frac \eta 2 \geq \eta$, or equivalently $\eta \leq 2m$.
\end{claimproof}

Finally, we round the $\OO(m)$ variables in $\bar{\vex}$ that take fractional values.

\paragraph*{Step 3: Round the $\OO(m)$ fractional variables.}

Let $I_k^*\subseteq I_k$, $k=1,\dots,(2/\delta)^m$, be the indices of variables that still take a fractional value after Step 2. Without loss of generality, we may assume that for all $j$'s in a fixed $ I_k^*$, $w_j$'s are in non-decreasing order.  

Let $\gamma_k\coloneqq\sum_{j\in I_k^*}(\bar{x}_j-\lfloor \bar{x}_j\rfloor)=\sum_{j\in I_k}(\bar{x}_j-\lfloor \bar{x}_j\rfloor)$. Constraint~\eqref{LP_r:2} guarantees that $\gamma_k$ is an integer. 
Therefore, we can round up the first $\gamma_k$ fractional variables in $I_k^*$ (i.e., variables of the smallest indices) to $\lceil \bar{x}_j\rceil$, and meanwhile round down the other fractional variables in $I_k^*$ to $\lfloor \bar{x}_j\rfloor$. 
This rounding method preserves the constraint $\sum_{j\in I_k}x_j=y_k$, and meanwhile does not increase the objective value. However, this rounding will introduce $\OO(m\delta\Delta)$ error to Constraint~(\ref{mip:1}). Thus, Theorem~\ref{conj:IP-general} follows by taking $\delta=\OO(\frac{\varepsilon}{m})$.

\begin{description}
    \item[Running time.] 
\end{description}

Step 1: $(2/\delta)^{\OO(m(2/\delta)^m)}\cdot |I|=2^{(\frac{m}{\varepsilon})^{\OO(m)}}\cdot |I|$

Step 2: $n^4 \cdot |I|$

Step 3: $(2/\delta)^m\cdot n\log n$

All in all, the total time is $2^{(\frac{m}{\varepsilon})^{\OO(m)}}\cdot \textnormal{poly}(|I|)$, which has the form of $f(m,\varepsilon)\cdot\textnormal{poly}(|I|)$.



\section{Proof of Theorem~\ref{conj:nfold-additive}}
To prove Theorem~\ref{conj-coro:nfold-additive}, we first need to prove Theorem~\ref{conj:nfold-additive}.
We restate the theorem below.
\begin{th2}
	Given is an integer program $\min\{\vew\vex: \sum_{i=1}^nD^i\vex^i=\veb^0, \vex^i\in\mathcal{P}^i, 1\le i\le n,\, \vex\in \Z^{nt} \}$ with optimal objective value $\OPT$, where $D^i\in\Q^{s\times t}$ and $\mathcal{P}^i$ is an arbitrary set of integer vectors.  Let  $\kappa=\max_{\veu\in \cup_{i=1}^n\mathcal{P}^i}\|\veu\|_\infty$, i.e., $\kappa$ is the largest $\ell_\infty$-norm among all integer vectors in $\cup_{i=1}^n\mathcal{P}^i$. 
	Then for arbitrary small  $\varepsilon>0$, there exists an algorithm of running time ${f(s,t,\kappa,\varepsilon)}\cdot\textnormal{poly}(|I|)$ which returns a near-feasible solution $\tilde{\vex}$ such that $\tilde{\vex}\in\set{\vex: \|\sum_{i=1}^nD^i\vex^i-\veb^0\|_{\infty}\le \varepsilon\Delta, \,\vex^i\in\mathcal{P}^i,1\le i\le n,\, \vex\in \Z^{nt} }$, and $\vew\tilde{\vex}\le \OPT$, where $\Delta=\max_{i\in [n]}\|D^i\|_{\infty}$. 
\end{th2}

To prove Theorem~\ref{conj:nfold-additive}, we follow the same general strategy as the proof of Theorem~\ref{conj:IP-general}. That is, we first reformulate the input IP into an MIP via some appropriate $\delta$-boxes, and then we solve the MIP, fix the values of integer variables and obtain an LP, and argue that the vertex solution of this LP only contains a small number of variables taking a fractional value. Finally we round those fractional variables. The main challenge lies in the argument for the vertex solution, which builds upon a specific block structure of the constraint matrix. To facilitate this argument, we need to further introduce some notions. 

\subsection{General mechanisms for LP that consists of two parts}

In this section, we develop some preliminaries tailored to our setting. 

We will encounter the case where the constraint matrix can be partitioned into two parts/subsystems: One with small rank and one where
for a given solution either the non-integral support is small or we can easily find
linearly independent kernel elements acting only on the non-integral support. Assume we have an LP
of the following form:
\[ A \vex = \veb_A, \qquad C \vex = \veb_C, \qquad \vex \geq \ve0  , \]
where $A$ is any matrix, possibly with large rank, but $C$ has only a few (say, $\iota$) rows.
Consider some fixed solution $\vex^*$ and we want to reduce $\nisupp(\vex^*)$
as much as possible. If $A$ has large rank, the combined system also has large rank,
so it is not really helpful trying to obtain a solution with a small non-integral support through arguing a small rank. However, assume that we are able
to find a set $\mathcal K$, $|\mathcal K| \geq \iota + 1$, of linearly independent non-trivial kernel elements of $A$, that are only non-zero on variables in $\nisupp(\vex)$. Then we can ``combine'' those
kernel elements to a non-trivial kernel element of the combined system, using \autoref{stmt:rank_nullity}.

More formally, the following lemma allows us to show non-zero nullity of the combined system
if enough kernel elements of the first system (more than the rank of the second one)
can be found.
\begin{lemma} \label{stmt:connect}
	Consider vector spaces $V,W$ where $\dim(V) < \infty$ and two linear maps
	$\phi_1,\phi_2 \colon V \to W$. If $\nullity(\phi_1) > \rank(\phi_2)$, then
	$\dim(\ker(\phi_1) \cap \ker(\phi_2)) > 0$.
\end{lemma}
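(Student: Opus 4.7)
The plan is to reduce the statement to the rank-nullity theorem (\autoref{stmt:rank_nullity}) by restricting $\phi_2$ to the subspace $\ker(\phi_1)$. Concretely, I would define $\psi \coloneqq \phi_2|_{\ker(\phi_1)} \colon \ker(\phi_1) \to W$, which is a linear map from a finite-dimensional vector space (since $\dim(V) < \infty$ implies $\dim(\ker(\phi_1)) \leq \dim(V) < \infty$). The crucial observation is that a vector $v \in \ker(\phi_1)$ lies in $\ker(\psi)$ if and only if it also lies in $\ker(\phi_2)$, so
\[
    \ker(\psi) = \ker(\phi_1) \cap \ker(\phi_2).
\]

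Next, I would apply \autoref{stmt:rank_nullity} to $\psi$, yielding
\[
    \dim(\ker(\phi_1)) = \nullity(\psi) + \rank(\psi).
\]
Since $\img(\psi) \subseteq \img(\phi_2)$, we have $\rank(\psi) \leq \rank(\phi_2)$. Substituting $\nullity(\phi_1) = \dim(\ker(\phi_1))$ and rearranging, I would obtain
\[
    \dim(\ker(\phi_1) \cap \ker(\phi_2)) = \nullity(\psi) = \nullity(\phi_1) - \rank(\psi) \geq \nullity(\phi_1) - \rank(\phi_2) > 0,
\]
where the final strict inequality is exactly the hypothesis of the lemma.

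I do not expect a real obstacle here; the whole argument is essentially one application of rank-nullity to a cleverly chosen restriction. The only subtle point worth double-checking is that $\psi$ genuinely maps into $W$ (which is immediate since $\phi_2$ does) and that the restriction preserves linearity (immediate since $\ker(\phi_1)$ is a subspace). No special structural hypotheses on $\phi_1$ or $\phi_2$ are needed beyond linearity and finite-dimensionality of the domain, so the proof will go through verbatim.
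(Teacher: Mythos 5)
Your proof is correct and is essentially identical to the paper's own argument: both restrict $\phi_2$ to $\ker(\phi_1)$, identify $\ker(\psi)$ with $\ker(\phi_1)\cap\ker(\phi_2)$, and apply the rank-nullity theorem together with $\rank(\psi)\le\rank(\phi_2)$. No gaps or differences worth noting.
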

\begin{proof}
	Let $\psi \colon \ker(\phi_1) \to W,\, x \mapsto \phi_2(x)$ be the restriction of $\phi_2$ to $\ker(\phi_1)$.
	Then $\rank(\psi) \leq \rank(\phi_2)$ and $\ker(\phi_1) \cap \ker(\phi_2) = \ker(\psi)$, and thus
	\[
		\dim(\ker(\phi_1) \cap \ker(\phi_2))
		= \nullity(\psi)
		\overann={\text{\autoref{stmt:rank_nullity}}} \dim(\ker(\phi_1)) - \rank(\psi)
		\overann\geq{\rank(\psi) \leq \rank(\phi_2)} \nullity(\phi_1) - \rank(\phi_2)
		\overann>{\text{assumption}} 0  . \qedhere
	\]
\end{proof}
The scenario mentioned above is then covered by applying this lemma with $V = \ker(A)$, $W = \img(C)$,
$\phi_1 \colon \vex \mapsto A \vex$, $\phi_2 \colon \vex \mapsto C \vex$: Given that $\nullity(A) > \rank(C)$,
there is a non-trivial kernel element of the combined system. Note that $\nullity(A) > \rank(C)$ translates
to ``there are at least $\rank(C) + 1$ linearly independent non-trivial kernel elements of $A$''.

If the constraint matrix of an LP is totally unimodular and the right-hand side is integral, it is well-known that vertex solutions are integers. Based on this, we can easily obtain the following proposition.  
\begin{proposition} \label{stmt:tu_aug}
	Let $\vex$ be a solution to an LP of the form $A\vex = \veb$, $\vel \leq \vex \leq \veu$, $\vel,\veu \in \Q^m$, $\veb \in \Z^m$, where the constraint matrix $A$ is totally unimodular. 
	Then if $\nisupp(\vex) \neq \emptyset$, we have $\nullity((A_j)_{j \in \nisupp(\vex)}) > 0$.
\end{proposition}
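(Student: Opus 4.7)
The plan is to argue by contradiction: assume $S \coloneqq \nisupp(\vex) \neq \emptyset$ but that $\nullity((A_j)_{j \in S}) = 0$, so the columns of $A$ indexed by $S$ are linearly independent. I will then use total unimodularity to extract an integer-invertible square submatrix and deduce that the values $x_j$ for $j \in S$ are forced to be integer, contradicting the definition of $S$.

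More concretely, let $B \coloneqq (A_j)_{j \in S}$. Under the assumption $\nullity(B) = 0$, we have $\rank(B) = |S|$, so we may select $|S|$ linearly independent rows of $B$ to form a square submatrix $M$ of $A$ of size $|S| \times |S|$. Since $A$ is totally unimodular, so is $M$, and invertibility of $M$ forces $\det(M) \in \{-1,+1\}$; hence $M^{-1}$ has integer entries. Let $M'$ denote the submatrix of $A$ obtained by keeping the same rows as $M$ but the columns outside $S$, and let $\veb''$ be the restriction of $\veb$ to those rows. Restricting the constraint $A\vex = \veb$ to the chosen rows gives
\begin{equation*}
    M \, (x_j)_{j \in S} \;=\; \veb'' \;-\; M'\,(x_j)_{j \notin S}.
\end{equation*}
The right-hand side is integer: $\veb'' \in \Z^{|S|}$ by hypothesis on $\veb$, and $M'(x_j)_{j \notin S} \in \Z^{|S|}$ because $M'$ has integer entries and every $x_j$ with $j \notin S$ is integer (this is precisely the definition of $S = \nisupp(\vex)$, noting that $x_j = 0 \in \Z$ for $j \notin \supp(\vex)$).

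Multiplying by $M^{-1} \in \Z^{|S| \times |S|}$ then yields $(x_j)_{j \in S} \in \Z^{|S|}$, contradicting $S \neq \emptyset$. The bounds $\vel \leq \vex \leq \veu$ play no role in the argument and can be ignored throughout. The only nontrivial step is the selection of an integer-invertible square submatrix; once $B$ has full column rank, this is a standard consequence of total unimodularity combined with the fact that any nonsingular TU submatrix has determinant $\pm 1$, so no genuine obstacle arises.
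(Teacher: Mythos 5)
Your proof is correct, and it takes a route that differs from the paper's in a meaningful way. The paper argues directly: it restricts $A$ to the columns $B=(A_j)_{j\in\nisupp(\vex)}$ and $\vex$ to $\vey$, observes that $B\vey$ equals an integral vector (since the discarded coordinates of $\vex$ are integral and $A$ is integral), invokes the integrality of feasible totally unimodular equality systems with integral right-hand side to obtain an integral solution $\vey^*$ of $B\vez=B\vey$, and then exhibits $\vey-\vey^*$ as a non-trivial kernel element of $B$. You instead argue by contradiction: if $\nullity(B)=0$ then $B$ has full column rank, so a nonsingular $|S|\times|S|$ TU submatrix $M$ with $\det(M)=\pm1$ and hence integral inverse exists, and solving the row-restricted system forces $(x_j)_{j\in S}$ to be integral, contradicting $S=\nisupp(\vex)\neq\emptyset$. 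Both arguments hinge on total unimodularity, but yours only needs the elementary fact that a nonsingular TU square matrix has determinant $\pm1$ (so its inverse is integral via Cramer's rule), making it self-contained, whereas the paper's is shorter by quoting the integral-solution property of TU systems — a property whose standard proof is essentially the submatrix-inversion step you carry out explicitly. Your remark that the bounds $\vel\le\vex\le\veu$ are irrelevant matches the paper, which likewise never uses them.
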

\begin{proof}
	Let $B$ be the matrix $A$ restricted to the columns from $\nisupp(\vex)$. Since $A$ is totally unimodular, so is $B$. 
	Let $\vey$ be the vector $\vex$ restricted to entries in $\nisupp(\vex)$.
	Then $B\vey$ is integral, as $Ax$ is integral and only $x_j$'s that are integral have been deleted from $\vex$ to obtain $\vey$.
	Let $\gamma \coloneqq A\vex - B\vey$.
	Then $\gamma$ is an integer vector as $A\vex$ and $B\vey$ also are. Equivalently, $B\vey = A\vex - \gamma$.
	Consider the LP $B\vez = A\vex - \gamma$ with $\vez$ being variables. It is feasible (due to $\vey$). Further, there is an integer vector $\vey^*$ such that $B\vey^* = A\vex - \gamma$, since $B$ is totally unimodular.
	By construction, $\vey$ has no integral components, so $\vey - \vey^*$ is a non-trivial kernel element of $B$, directly implying $B$, and hence also $A$, has non-zero nullity.
\end{proof}
Rephrasing the above: A solution to a totally unimodular system with integral right-hand side is either
integral or the subsystem induced by its non-integral variables is underdetermined.

\subsection{Proof of Theorem~\ref{conj:nfold-additive}}\label{sec:3.2}
Now we are ready to prove Theorem~\ref{conj:nfold-additive}.
 Let $\tau\coloneqq\max_{i\in [n]}|\mathcal{P}^i|=\kappa^t$. By adding dummy vectors, we may assume $|\mathcal{P}^i|=\tau$ for all $i$ and let $\mathcal{P}^i=\{\vep^i_1,\vep^i_2,\dots,\vep^i_{\tau}\}$. Consider the matrix $\mathcal{D}^i=(D^i\vep^i_1,D^i\vep^i_2,\dots,D^i\vep^i_{\tau})$ and notice that $\mathcal{D}^i$ is fixed. Let $\Delta=\max_{i\in [n]}\|D^i\|_\infty$. It is clear that $\|\mathcal{D}^i\|_\infty=\Delta\kappa t$. 
 
To prove Theorem~\ref{conj:nfold-additive}, we shall follow the general strategy for proving Theorem~\ref{conj:IP-general}. We first cut the box $[-\|\mathcal{D}^i\|_\infty,\|\mathcal{D}^i\|_\infty]^s$ into a constant number of small boxes, introduce one hyper-integer variable for each small box, and obtain an MIP, which can be solved in polynomial time. Finally, we round fractional variables to integer variables.

Towards the proof, we first rewrite the input IP in a form that is easier for our analysis.

\paragraph*{Step 0: Rewrite the input IP.}
We cut the box $\interval{-\|\mathcal{D}^i\|_\infty}{\|\mathcal{D}^i\|_\infty}^s$ into $(\frac{2}{\delta})^{s}$ small $\delta$-boxes of the form $\prod_{i=1}^s \interval{(\lambda_i-1)\delta\Delta\kappa t}{\lambda_i\delta\Delta\kappa t}$, where $\lambda_i\in\{-1/\delta+1,-1/\delta+2,\dots,1/\delta\}$, and $\delta$ is a sufficiently small constant. In particular, it will become clear later that taking $\delta=\OO(\frac{\varepsilon}{st\kappa^{t+1}})$ suffices.

 We pick a fixed vector within each $\delta$-box and let it be the canonical vector of this $\delta$-box. Each $D^i\vep^i_j$ lies in some $\delta$-box and corresponds to the canonical vector of this $\delta$-box. Hence, $\mathcal{D}^i=\mathcal{V}^i+\tilde{\mathcal{D}}^i$ where $\mathcal{V}^i$ is the matrix consisting of the canonical vectors corresponding to the column vectors of $\mathcal{D}^i$, and $\tilde{\mathcal{D}}^i$ is the ``residue matrix'' satisfying that $\|\tilde{\mathcal{D}}^i\|_\infty\le \delta\Delta\kappa t$. 
 
Since each $\mathcal{V}^i$ has $\tau$ columns, it is easy to see that there are $\rho\coloneqq((\frac{2}{\delta})^{s})^\tau=(\frac{2}{\delta})^{(s\kappa^t)}$ different types of canonical matrices $\mathcal{V}^i$'s. For $k=1,\ldots,\rho$, let $I_k$  denote the set of indices $i$'s (of $\mathcal{D}^i$'s) whose canonical matrix is of type $k$. The constraint $\sum_{i=1}^nD^i\vex^i= \veb^0$ essentially asks for selecting some column from each $\mathcal{D}^i$ such that they add up to $\veb^0$, so we rewrite it with the help of auxiliary variables $\vez$ which denote such a choice: Let $z_{ij}=1$ represent that we select the $j$-th column of $\mathcal{D}^i$ and $z_{ij}=0$ otherwise.  We have the following equivalent IP:
 \begin{subequations}
 	\begin{eqnarray}
 	(\text{IP}_3)  \quad  \min && \sum_{i=1}^{n}(\sum_{j=1}^{\tau} \vew^i\vep^i_jz_{ij}) \label{mip:02}\\
 	&& \sum_{k=1}^{\rho}\sum_{i\in I_k}\sum_{j=1}^\tau (D^i\vep^i_j)z_{ij} = \veb^0 \label{mip:12}\\
 	&&\sum_{j=1}^{\tau}z_{ij}=1, \quad \forall 1\le i\le n \label{mip:22}\\
 	&&z_{ij}\in \{0,1\},	\quad \forall 1\le i\le n, 1\le j\le \tau \label{mip:32}
 	\end{eqnarray}
 \end{subequations}

\paragraph*{Step 1: Transform $(\text{IP}_3)$ to a mixed IP with $\OO(1)$ integer variables.}
 
Recall that there are $\rho$ distinct types of canonical matrices $\mathcal{V}^i$'s. We let $(\vev^k_1,\vev^k_2,\dots,\vev^k_{\tau})$ be the $k$-th type. Suppose that the canonical matrix of $\mathcal{D}^i$ is of type $k$, and 
 then $\tilde{\mathcal{D}}^i=(D^i\vep^i_1-\vev^k_1,D^i\vep^i_2-\vev^k_2,\dots,D^i\vep^i_{\tau}-\vev^k_{\tau})$. 

Similar to the proof of Theorem~\ref{conj:IP-general}, we introduce $y_{kj}=\sum_{i\in I_k}z_{ij}$ as the group variable that counts how many $j$-th columns we have selected among all $\mathcal{D}^i$'s whose canonical matrix is of type $k$. There are $\rho\tau=(\frac{2}{\delta})^{(s\kappa^t)}\kappa^t$ group variables. We establish $(\text{MIP}_4)$ below, where $y_{kj}$'s are integer variables and $z_{ij}$'s are fractional variables.
\begin{subequations}
 	\begin{eqnarray}
 	(\text{MIP}_4)  \quad  \min && \sum_{i=1}^{n}(\sum_{j=1}^{\tau} \vew^i\vep^i_jz_{ij})  \label{mip:03}\\
 	&& \sum_{k=1}^{\rho}\sum_{j=1}^\tau \vev_j^k y_{kj}+ \sum_{k=1}^{\rho}\sum_{i\in I_k}\sum_{j=1}^\tau (D^i\vep^i_j-\vev_j^k)z_{ij} = \veb^0  \label{mip:13}\\
 		&& y_{kj}=\sum_{i\in I_k}z_{ij}, \quad \forall 1\le j\le \tau, 1\le k\le \rho\label{mip:43}\\
 	&&\sum_{j=1}^{\tau}z_{ij}=1, \quad \forall  i\in I_k, 1\le k\le \rho \label{mip:23}\\
 &&z_{ij}\in[0,1], y_{kj}\in\Z, 	\quad \forall  1\le j\le \tau, i\in I_k, 1\le k\le \rho \label{mip:33}
 	\end{eqnarray}
 \end{subequations}
 
 Notice that in $(\text{MIP}_4)$, the coefficients $\vew^i\vep^i_j$, $\vev_j^k$, and $(D^i\vep^i_j-\vev_j^k)$ are fixed values instead of variables. 
 Observe that $(\text{MIP}_4)$ only contains $\rho\tau$ integer variables $y_{kj}$. Thus, applying Kannan's algorithm~\cite{kannan1987minkowski}, an optimal solution to $(\text{MIP}_4)$ can be computed in $(\rho\tau)^{\OO(\rho\tau)}\cdot |I|$ time, which is polynomial.
 
 Let $y_{kj}=y_{kj}^*\in\Z$ and $z_{ij}=z_{ij}^*\in [0,1]$ be the optimal solution to $(\text{MIP}_4)$. 

\paragraph*{Step 2:  Obtain a feasible solution with $\OO(s\tau)$ variables taking a fractional value.}

Fix $\vey=\vey^*$, and then $(\text{MIP}_4)$ becomes an LP. We see that 
this LP is a combined system: We denote Constraint \eqref{mip:13} as $C\vez=\veb_C$, and Constraints \eqref{mip:43} and \eqref{mip:23} as $A_\Sigma\vez=\veb_A$. We denote by $\mathrm{LP}_\Sigma$ the linear system obtained by removing $C\vez=\veb_C$. That is, the constraint matrix of $\mathrm{LP}_\Sigma$ is exactly $A_\Sigma$.

Observe that $A_\Sigma = \begin{pmatrix} A_1 && \\ & \ddots & \\ && A_\rho \end{pmatrix}$ is a block
diagonal matrix, as for each $k \in \iv \rho$, the set of variables $I_k \times \iv \tau$ appearing in the constraints is disjunct with those for any other $k' \in \iv \rho \setminus \{ k \}$.
Moreover, each $A_k\in \{0,1\}^{(|I_k|+\tau)\times (|I_k|\cdot\tau)}$ is the constraint matrix of an assignment problem and thus
totally unimodular (and thus so is $A_\Sigma$). More precisely, for $k=1,\dots,\rho$, 
\[
     \begin{array}{c@{\hspace{-5pt}}l}   
     A_k=\left(
     \begin{array}{ccc:ccc:c:ccc}
       1   &  &  & 1 &  &  & &1&&\\
       & \ddots &  &  & \ddots&&\cdots &&\ddots& \\
       & & 1 & &&1&&&&1 \\ 
       \hdashline
      1 & \cdots & 1&  &  &  && & & \\
        &  & & 1 & \cdots & 1 && & & \\
         & & &  & &&\ddots &  & &  \\
          &  & &  &  &  && 1&\cdots &1 \\
     \end{array}
     \right)
     &
     \begin{array}{l}
          \left.\rule{0mm}{7.6mm}\right\}|I_k| \text{ rows} \\  
          \\
          \left.\rule{0mm}{10.2mm}\right\}\tau \text{ rows}
     \end{array}
     \\[-5pt]
     \begin{array}{ccr}
       \hspace{23pt}  \underbrace{\rule{14mm}{0mm}}_{|I_k| \text{ columns}} &  
          \underbrace{\rule{14mm}{0mm}}_{|I_k| \text{ columns}}& {\hspace{20pt}
           \underbrace{\rule{14mm}{0mm}}_{|I_k| \text{ columns}}}
     \end{array}
     &  
     \end{array}
     \]
where the first $|I_k|$ rows are for Constraint~\eqref{mip:23}, and the last $\tau$ rows are for Constraint~\eqref{mip:43}.

\begin{remark*}
It is clear that Constraint \eqref{mip:13} together with $A_\Sigma\vez=\veb_A$ form an $n$-fold structure. However, the number of rows and columns of each $A_k$ can be prohibitively large. We mentioned in Section~\ref{subsec:compare} before that Cslovjecsek et al.~\cite{cslovjecsek2021block} established a general result which implies that in a vertex solution satisfying $C\vez=\sum_kC_k\vez^k=\veb_C$ and $A_\Sigma\vez=\veb_A$, at most $s$ vectors out of $\vez^1,\vez^2,\dots,\vez^{\rho}$ may take fractional values, but each $\vez^k$ contains $|I_k|\cdot \tau$ fractional variables, which means the total number of fractional variables is still too large. Our goal is to upper bound the number of fractional variables to $\OO(s\tau)$, which requires a more fine-grained analysis that leverages the special structure of $A_k$'s.
\end{remark*}

Interestingly, for $A_k$ it is easy to find a comparably small kernel element if 
solutions have enough non-integral components (for integral right-hand sides):
\begin{lemma} \label{stmt:rank}
	For $k \in \iv \rho$, consider a vertex solution $\vez$ to $A_k \vez = \veb$ for 
	some integer vector $\veb$. Let $B$ be the matrix $A_k$ restricted to the columns in $\nisupp(\vez)$.
	Then $\rank(B) \leq 2\tau$. 
\end{lemma}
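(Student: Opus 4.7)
The plan is to interpret $A_k$ as the vertex--edge incidence matrix of a bipartite graph $H_k$. The two vertex classes are the ``source'' set $I_k$ (one vertex per row coming from Constraint~\eqref{mip:23}) and the ``destination'' set $\iv{\tau}$ (one vertex per row coming from Constraint~\eqref{mip:43}); each column indexed by $(i,j)$ contributes a single edge joining source $i$ with destination $j$. Under this correspondence, $B$ is exactly the incidence matrix of the subgraph $G \subseteq H_k$ whose edge set is $\nisupp(\vez)$, so bounding $\rank(B)$ reduces to a graph-theoretic estimate on $G$.

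First, I would apply Proposition~\ref{stmt:bfslu} to $\vez$ under the implicit bounds $0 \le \vez \le 1$ inherited from the MIP context: the columns of $B$ are linearly independent, so $\rank(B) = |\nisupp(\vez)| = |E(G)|$. On the other hand, the classical rank formula for the bipartite vertex--edge incidence matrix gives $\rank(B) = p' + q' - c$, where $p'$ and $q'$ are the numbers of non-isolated source and destination vertices of $G$, and $c$ is the number of connected components of $G$ after deleting isolated vertices. Equating the two expressions forces $|E(G)| = p' + q' - c$, which means that \emph{every component of $G$ is a tree}.

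The key structural input is a source-side degree bound: every non-isolated $i \in I_k$ has degree at least $2$ in $G$. Indeed, the row of $A_k$ for source $i$ encodes $\sum_{j} z_{ij} = b_i$ with $b_i \in \Z$; if $i$ had degree exactly one in $G$ through an edge $(i,j_0)$, then $z_{ij_0}$ would be the unique fractional entry among $\{z_{ij}\}_j$, and the integrality of $b_i$ together with the integrality of the remaining $z_{ij}$'s would force $z_{ij_0} \in \Z$, contradicting $(i,j_0) \in \nisupp(\vez)$.

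It remains to aggregate per component. A tree component with $a$ sources and $b$ destinations has exactly $a + b - 1$ edges, and that edge count equals the sum of its source degrees, which is $\ge 2a$ by the previous step; hence $a \le b - 1$. Summing over the $c$ components yields $p' \le q' - c$, so
\[ \rank(B) \;=\; p' + q' - c \;\le\; 2(q' - c) \;\le\; 2 q' \;\le\; 2\tau . \]
The main obstacle is spotting the forest structure of $G$ by matching the column-linear-independence count with the bipartite-incidence rank formula; once this is in place, the remainder of the argument is a routine handshake-type calculation on the source side.
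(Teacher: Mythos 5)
Your proof is correct, but it takes a genuinely different route from the paper's. The paper argues by contradiction with a pure counting step and never touches the vertex hypothesis: assuming $\rank(B)>2\tau$, it selects $2\tau+1$ linearly independent fractional columns, deletes zero rows, and counts at most $\tau$ surviving rows from Constraint~\eqref{mip:43} plus at most half as many rows as columns from Constraint~\eqref{mip:23} --- since a row $\sum_j z_{ij}=b_i$ with integral $b_i$ cannot meet exactly one fractional variable --- so the restricted matrix has fewer rows than columns, a contradiction. You instead use the vertex hypothesis essentially: Proposition~\ref{stmt:bfslu} gives independence of the $\nisupp(\vez)$ columns, hence $\rank(B)=|\nisupp(\vez)|$, and you then run a graph-theoretic argument (bipartite incidence rank formula, forest structure of the fractional support, per-component handshake with source degrees at least $2$). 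Both proofs pivot on the same observation --- each \eqref{mip:23}-row meets zero or at least two fractional columns --- which is exactly your degree bound. What your route buys is a stronger structural conclusion (the fractional edges form a forest in which every source has degree at least two) and transparency about where the vertex hypothesis enters; this is not cosmetic, since without some such hypothesis the bare rank bound can fail (take $z_{ij}=1/2$ for all $i,j$: then $B$ is the full incidence matrix of $K_{|I_k|,\tau}$, of rank $|I_k|+\tau-1$), whereas the paper's counting relies only on integrality of $\veb$, which is what lets it be invoked on solutions that are not vertices of the block subsystem inside Lemma~\ref{stmt:nullity_sigma}. Note also that once you have $\rank(B)=|\nisupp(\vez)|$, you could finish in one line exactly as in Claim~\ref{claim:1}: $\rank(B)$ is at most the number of nonzero rows of $B$, i.e., at most $\tau+|\nisupp(\vez)|/2$, giving $|\nisupp(\vez)|\le 2\tau$; the incidence-rank formula and the forest detour are not needed for the stated bound.
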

\begin{proof}
	If $|\nisupp(\vez)| \leq 2\tau$, the statement is trivial. Thus, assume $|\nisupp(\vez)| \geq 2\tau + 1$.
	For the sake of contradiction, assume $\rank (B) >  2\tau$. Let $I \subseteq \nisupp(\vez)$ be a set of $|I| = 2 \tau + 1$ column indices. Let $B'$ be the matrix $B$ restricted to the columns in $I$, where afterwards
	all zero-rows have been removed. Clearly, $2 \tau + 1 = \rank(B') \leq \rank(B)$ as we only removed zero-rows.
	Observe that every row in $\rank( B' )$ that corresponds to Constraint~\eqref{mip:23}
	has to contain at least two non-zero variables due to the integral right-hand side. Moreover,
	all the constraints in \eqref{mip:23} are variable-disjunct. So the number of rows in
	$B'$ is at most 
	\[ \tau + \frac {|I|} 2 = \tau + \frac{2\tau + 1} 2 < 2 \tau + 1 .\]

	However, we have $|I| = 2\tau + 1$ columns, so $B'$ has non-zero nullity, which,
	by \autoref{stmt:rank_nullity}, is a contradiction
	to $\rank(B') = 2 \tau + 1$.
\end{proof}
\begin{lemma} \label{stmt:nullity_sigma}
	Let $\hat \vez$ be a solution to $\mathrm{LP}_\Sigma$, and $B_\Sigma$ the matrix $A_\Sigma$ restricted to
	the columns $\nisupp(\hat \vez)$. Then we have 
	\[ \nullity(B_\Sigma) \geq \frac{ |\nisupp(\hat \vez)| }{ 2\tau + 1 } . \]
\end{lemma}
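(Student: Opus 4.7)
The plan is to exploit the block-diagonal structure of $A_\Sigma$. Writing $A_\Sigma = \operatorname{diag}(A_1,\ldots,A_\rho)$ and letting $n_k$ denote the number of columns of $\nisupp(\hat\vez)$ lying in block $k$, one has $B_\Sigma = \operatorname{diag}(B_1,\ldots,B_\rho)$ with $B_k$ equal to $A_k$ restricted to those $n_k$ columns, and $\sum_k n_k = |\nisupp(\hat\vez)|$. Since the nullity of a block-diagonal matrix is the sum of the nullities of its diagonal blocks, it suffices to prove the per-block inequality $\nullity(B_k) \geq n_k/(2\tau+1)$ for every $k$ and then sum over $k$.

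I would prove this per-block inequality by a case analysis on $n_k$. First observe that the right-hand side of the $k$-th block system $A_k\vez = \veb_k$ is integral: the entries coming from \eqref{mip:43} are $y^*_{kj}\in\Z$, and those coming from \eqref{mip:23} are the constant $1$. When $n_k \geq 2\tau+1$, I apply \autoref{stmt:rank} to $\hat\vez$ restricted to block $k$ to obtain $\rank(B_k) \leq 2\tau$; the rank-nullity theorem (\autoref{stmt:rank_nullity}) then gives $\nullity(B_k) = n_k - \rank(B_k) \geq n_k - 2\tau$, and the elementary equivalence $n_k - 2\tau \geq n_k/(2\tau+1) \Leftrightarrow n_k \geq 2\tau+1$ closes this case. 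When $0 < n_k \leq 2\tau$, the rank bound degenerates, so instead I use that $A_k$ is the constraint matrix of an assignment problem and hence totally unimodular, and appeal to \autoref{stmt:tu_aug}: this forces $\nullity(B_k) \geq 1$, which dominates $n_k/(2\tau+1) \leq 2\tau/(2\tau+1) < 1$. The case $n_k = 0$ is trivial.

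Combining the three cases and summing yields $\nullity(B_\Sigma) = \sum_k \nullity(B_k) \geq \sum_k n_k/(2\tau+1) = |\nisupp(\hat\vez)|/(2\tau+1)$, as required. The main subtlety is the \emph{small-block} regime: \autoref{stmt:rank} provides no useful rank bound there, and one must exploit total unimodularity of $A_k$ together with integrality of $\veb_k$ to extract a single dimension of kernel from each nonempty block. This matching of contributions in the two regimes is exactly what forces the denominator in the bound to be $2\tau+1$ rather than $2\tau$.
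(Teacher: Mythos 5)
Your proof is correct and follows essentially the same route as the paper: decompose $B_\Sigma$ block-wise, use Proposition~\ref{stmt:tu_aug} (total unimodularity plus integral right-hand side) to get one kernel dimension in each small nonempty block, and use Lemma~\ref{stmt:rank} with the rank--nullity theorem plus the elementary inequality $n_k-2\tau\ge n_k/(2\tau+1)$ in the large-block regime. The only differences (placing the boundary case $n_k=2\tau+1$ in the rank case rather than the TU case, and spelling out that the nullity of a block-diagonal matrix is the sum of the block nullities and that the block right-hand sides $y^*_{kj}$ and $1$ are integral) are cosmetic.
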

\begin{proof}
	Observe that since $A_\Sigma$ is a block diagonal matrix, to prove this lemma, it suffices to show for every $k \in \iv \rho$ that $B_k$, the restriction of $A_k$ to $\nisupp(\hat \vez) \cap I_k$, we have
	(where $\eta \coloneqq |\nisupp(\hat \vez) \cap  I_k|$):
	\[ 
	\nullity(B_k) \geq {\frac{\eta}{2 \tau + 1}} 
	.\]
	If $\eta = 0$, the statement is trivial.
	If $0 < \eta \leq 2 \tau + 1$, we only have to show that the nullity is non-zero, which follows
	directly from \autoref{stmt:tu_aug}. For $2 \tau + 1 < \eta$, by \autoref{stmt:rank} and rank-nullity theorem, the nullity
	is at least $\eta - 2 \tau$. Observe that for any $\bar{\iota}$ we have
	\[ \eta - \bar{\iota} \geq \frac \eta {2 \tau +1} 
	\iff \eta - \frac \eta {2 \tau + 1} \geq\bar{\iota}
	\iff \frac{(2 \tau + 1)\eta - \eta}{2 \tau + 1} \geq \bar{\iota}
	\iff \frac{2 \tau \eta}{2 \tau + 1} \geq \bar{\iota} .
	\]
	Furthermore, as $\eta > 2 \tau + 1$, then $\frac{2 \tau \eta}{2\tau + 1} \geq \frac{2 \tau (2 \tau + 2)}{2 \tau + 1} > 2 \tau,$ i.e.,  $\eta - \bar{\iota} \geq \frac \eta {2 \tau +1}$ holds in particular for $\bar{\iota} \coloneqq 2 \tau$,
	which eventually leads to
	\[ 
		\frac{\eta}{2 \tau + 1}
		\leq \eta - 2 \tau \leq \nullity(B_k) . \qedhere
	\]
\end{proof}
Now, let us consider vertex solution $\gamma$ to the combined linear system, 
computed by \autoref{stmt:nisupprank_general}.
Let $B_\Sigma$ be the restriction of $A_\Sigma$ to the columns in $\nisupp(\gamma)$.
We claim that $s \geq \frac{ |\nisupp(\gamma)| }{ 2 \tau + 1 }$.
Assume otherwise.
By \autoref{stmt:nullity_sigma}, $\frac{ |\nisupp(\gamma)| }{ 2 \tau + 1 } \leq \nullity(B_\Sigma)$.
By \autoref{stmt:connect}, having $\nullity(B_\Sigma) > s$ suffices to prove the existence of 
a non-trivial kernel element on $\nisupp(\gamma)$ of the combined system, which is the one that includes Constraint~\eqref{mip:13}.
This is a contradiction to \autoref{stmt:nisupprank_general} which postulates that the constraint matrix restricted to $\nisupp(\gamma)$ is non-singular. 

Therefore, it follows that $ |\nisupp(\gamma)| \le s(2\tau+1)=\OO(s\tau)$.

\paragraph*{Step 3: Round the $\OO(s\tau)$ fractional variables.} 

We can round the fractional variables for each $k$, respectively, since the constraints for different $k$'s cannot affect each other. For a fixed $k$, by Constraint~(\ref{mip:43}), we have $y_{kj}=\sum_{i\in I_k}z_{ij}, \forall 1\le j\le \tau$, and by Constraint~(\ref{mip:23}), it holds that $\sum_{j=1}^{\tau}z_{ij}=1,  \forall  i\in I_k$. We assume $I_k=\{i_1,i_2,\ldots,i_\ell\}$, and then Constraints~(\ref{mip:43}) and~(\ref{mip:23}) are shown as Figure~\ref{fig:2}.
 
 \begin{figure}[ht]
 	\centering
\includegraphics[width=1\linewidth]{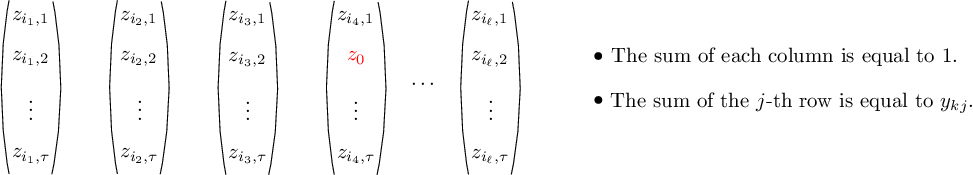}
 	\caption{Constraints~(\ref{mip:43}) and~(\ref{mip:23}) for a fixed $k$.}
 	\label{fig:2}
 \end{figure}
 
 Check all fractional variables $z_{ij}$'s, and pick up the fractional variable with the smallest coefficient in the objective function~(\ref{mip:03}). Without loss of generality, we let $z_0$ be this fractional variable. See Figure~\ref{fig:2} as an illustration. Then we round up the value of $z_0$ to 1. By the constraint $\sum_{j=1}^{\tau}z_{ij}=1,  \forall  i\in I_k$, we round down the values of these variables in the same column as $z_0$ to zeros. This preserves the constraint $\sum_{j=1}^{\tau}z_{ij}=1$. 
 
 Next, for the fractional variables in the same row as $z_0$, we apply a greedy rounding similar as that in Step 3 of Theorem~\ref{conj:IP-general}. We assume $j_0$ is the row number that $z_0$ has. Since $y_{kj}=\sum_{i\in I_k}z_{ij}, \forall 1\le j\le \tau$, without loss of generality we assume that $z_{ij_0}$'s where $i\in I_k$ in the summation are ordered by the non-decreasing coefficients in the objective function~(\ref{mip:03}). We compute $\sum_{i\in I_k}(z_{ij_0}-\lfloor z_{ij_0}\rfloor)$. Let $\gamma_{kj_0}\coloneqq\sum_{i\in I_k}(z_{ij_0}-\lfloor z_{ij_0}\rfloor)$. Then we pick out the first $\gamma_{kj_0}$ variables, and round up them to 1 and round down the remaining fractional variables $z_{ij_0}$'s to zeros. 
This rounding method will preserve the constraint $y_{kj_0}=\sum_{i\in I_k}z_{ij_0}$. 

After the previous two processes, we delete the column and row where $z_0$ is located. Iteratively find the fractional variable with the smallest coefficient in the objective function from the current fractional variables, and repeat the above operations. Clearly, the constraints $\sum_{j=1}^{\tau}z_{ij}=1,  \forall  i\in I_k$ and $y_{kj}=\sum_{i\in I_k}z_{ij}, \forall 1\le j\le \tau$ are always satisfied.

We round up or down all fractional variables by preserving Constraints~(\ref{mip:43}) and~(\ref{mip:23}) and meantime not increasing the objective, while this introduces $\OO(s\tau\delta\Delta\kappa t)$ error to Constraint~(\ref{mip:13}). Thus, Theorem~\ref{conj:nfold-additive} follows by taking $\delta=\OO(\frac{\varepsilon}{s\tau\kappa t})=\OO(\frac{\varepsilon}{st\kappa^{t+1}})$. 

\begin{description}
    \item[Running time.] 
\end{description}

Step 1: $(\rho\tau)^{\OO(\rho\tau)}\cdot |I|=((\frac{2}{\delta})^{(s\kappa^t)}\kappa^t)^{\OO((\frac{2}{\delta})^{(s\kappa^t)}\kappa^t)}\cdot |I|=2^{(\frac{s\kappa^t}{\varepsilon})^{\OO(s\kappa^t)}}\cdot |I|$

Step 2: $(\rho n)^4\cdot |I|=(\frac{2}{\delta})^{\OO(s\kappa^t)}n^4\cdot |I|$

Step 3: $\rho\tau^2 n^2\log n=((\frac{2}{\delta})^{s})^\tau \tau^2 n^2\log n=(\frac{2}{\delta})^{\OO(s\kappa^t)}n^2\log n$

All in all, the total time is $2^{(\frac{s\kappa^t}{\varepsilon})^{\OO(s\kappa^t)}}\cdot \textnormal{poly}(|I|)$, which has the form of $f(s,t,\kappa,\varepsilon)\cdot\textnormal{poly}(|I|)$.

\section{Approximation algorithms for $n$-fold IPs - Proof of Theorem~\ref{conj-coro:nfold-additive}}

This section is devoted to the proof of Theorem~\ref{conj-coro:nfold-additive}. We restate the theorem below. 


 
 \begin{th3}
Given is an integer program $\min\{\vew\vex: \sum_{i=1}^nD^i\vex^i=\veb^0, A^i\vex^i= \veb^i,1\le i\le n,\, \ve0\le \vex\le \veu, \vex\in \Z^{nt} \}$ with optimal objective value $\OPT$, where $A^i\in\Q^{s_A\times t_A}_{\ge0}$, $D^i\in\Q^{s_D\times t_D}_{\ge 0}$, and $t_A=t_D=t$. 
	Then for arbitrary small  $\varepsilon>0$, there exists an algorithm of running time ${f(s_A,s_D,t,\varepsilon)}\cdot\textnormal{poly}(|I|)$ which returns a near-feasible solution $\tilde{\vex}$ such that $\tilde{\vex}\in\{\vex: (1-\varepsilon)\veb^0\le\sum_{i=1}^nD^i\vex^i\le (1+\varepsilon)\veb^0,\, (1-\varepsilon)\veb^i\le A^i\vex^i\le (1+\varepsilon)\veb^i, 1\le i\le n,\, \ve0\le \vex\le\veu,\, \vex\in \Z^{nt} \}$, and $\vew\tilde{\vex}\le \OPT$. 
\end{th3}

The above theorem is closely related to Theorem~\ref{conj:nfold-additive}. In particular, since all $A^i$'s are positive matrices, if all entries in $A^i$ is large compared to $\veb^i$, then we know each coordinate of $\vex^i$ must be small. Consequently, $\vex^i$ may only take $\OO(1)$ possible values, in other words, $\vex^i\in \mathcal{P}^i$ where $|\mathcal{P}^i|$ is fixed, and Theorem~\ref{conj:nfold-additive} is applicable. What if some entries of $A^i$ are small, such that some coordinate of $\vex^i$, say, $x^i_j$, can take a large value? In this case, we observe that when the variable $x^i_j$ increases or decreases by a small amount, the equation $A^i\vex^i=\veb^i$ may only be violated by some small amount. This motivates us to split $\vex^i$ into two parts, the major part and the minor part, where the major part can only take $\OO(1)$ possible values, and the minor part can be handled like a fractional variable. 

More precisely, let $A^i_j$ be the $j$-th column of matrix $A^i$. By the constraint $A^i\vex^i= \veb^i$,  we have 
\begin{eqnarray*}
& A^i_1x^i_1+A^i_2x^i_2+\cdots+A^i_tx^i_t= \veb^i.
\end{eqnarray*}
We classify all $A^i_j$'s into two types: 
\begin{itemize}
\item If there is a component in vector $A^i_j$ with a value at least $\psi$, then the vector $A^i_j$ is defined as {\bf{big}}. 
\item Otherwise, if the values on all components of vector $A^i_j$ are less than $\psi$, then the vector $A^i_j$ is defined as {\bf{small}}. 
\end{itemize}
Here, $\psi$ is a parameter to be decided later. In fact, we will take $\psi\coloneqq\frac{\varepsilon}{2t}$. We divide the proof of Theorem~\ref{conj-coro:nfold-additive} into two cases. 

\noindent{\bf{Case 1.}} If all $A^i_j$'s are big vectors in $A^i\vex^i= \veb^i$ for all $i$'s, then we have $\sum_{j=1}^t x^i_j=\OO(s_A/\psi)$, which implies that $\|\vex^i\|_\infty\le\|\vex^i\|_1=\OO(s_A/\psi)$. By Theorem~\ref{conj:nfold-additive}, there exists an algorithm of running time ${f(s_A,s_D,t,\varepsilon)}\cdot\textnormal{poly}(|I|)$ which returns a near-feasible solution $\tilde{\vex}$ such that $\tilde{\vex}\in\{\vex: (1-\varepsilon)\veb^0\le\sum_{i=1}^nD^i\vex^i\le (1+\varepsilon)\veb^0, \,A^i\vex^i= \veb^i,\, \ve0\le \vex\le \veu, \vex\in \Z^{nt} \}$, and $\vew\tilde{\vex}\le \OPT$. Theorem~\ref{conj-coro:nfold-additive} follows.
\vspace{3pt}

 \noindent{\bf{Case 2.}} Otherwise, 
for every $A^i_j$ we can multiply it with some $\lambda_j^i$ to make it big. If some $A^i_j$ is initially big, let its $\lambda_j^i$ be 1. Then the constraint $A^i\vex^i= \veb^i$ can be written as
\begin{eqnarray*}
A^i_jx^i_j=\lambda^i_jA^i_j\left\lfloor\frac{x^i_j}{\lambda^i_j}\right\rfloor+A^i_j\left(x^i_j-\lambda^i_j\left\lfloor\frac{x^i_j}{\lambda^i_j}\right\rfloor\right),
\end{eqnarray*}
where $\lambda^i_j$ is the smallest integer such that $\lambda^i_jA^i_j$ is big. Denote $(x^i_j)'\coloneqq\lfloor\frac{x^i_j}{\lambda^i_j}\rfloor$, and $(x^i_j)''\coloneqq x^i_j-\lambda^i_j\lfloor\frac{x^i_j}{\lambda^i_j}\rfloor$. 
Then we have 
\begin{eqnarray*}
A^i_jx^i_j=\lambda^i_jA^i_j(x^i_j)'+A^i_j(x^i_j)''.
\end{eqnarray*}
 Note that the coefficients of $(x^i_j)'$ are big, while that of $(x^i_j)''$ are not. We can bound $\sum_{j=1}^t A^i_j(x^i_j)''$ by a negligible value due to a simple analysis. It implies that all possible values of $(x^i_j)'$ can be enumerated. 
 
 Similarly, the other constraint $\sum_{i=1}^nD^i\vex^i=\veb^0$ can be   written as 
 \begin{eqnarray*}
D^i_jx^i_j=\lambda^i_jD^i_j(x^i_j)'+D^i_j(x^i_j)''.
\end{eqnarray*}
Then we formulate a new IP taking $(x^i_j)'$ and $(x^i_j)''$ as variables. In the new IP, the constraint $A^i\vex^i=\veb^i$ can be safely replaced by $(\vex^i)'\in \mathcal{P}^i$.  We shall deal with the two summands $\sum_{i=1}^n\sum_{j=1}^t \lambda^i_jD^i_j(x^i_j)'$ and $\sum_{i=1}^n\sum_{j=1}^t D^i_j(x^i_j)''$ separately. The way we deal with the former is the same as Step 1 of Theorem~\ref{conj:nfold-additive}, and the way we deal with the latter is the same as Step 1 of Theorem~\ref{conj:IP-general}. Afterwards, we apply the rounding procedures specified in Theorem~\ref{conj:nfold-additive} and Theorem~\ref{conj:IP-general}.

We defer the detailed proof of Theorem~\ref{conj-coro:nfold-additive} to Appendix~\ref{appendix-1}.

\section{Conclusion}
We study approximation algorithms for IPs with a constant number of constraints and $n$-fold IPs. For IPs with a constant number of constraints, our algorithm returns a solution that violates the constraints by an additive error of $\OO(\varepsilon\Delta)$. For $n$-fold IPs, our algorithm returns a solution within a multiplicative error of $1+\varepsilon$. It is not clear whether similar results can be obtained for other block-structured IPs such as 2-stage stochastic IPs or $4$-block $n$-fold IPs. It is also an interesting open problem whether an additive approximation scheme can be obtained for $n$-fold IPs.



\bibliography{lipics-v2021-sample-article}

\clearpage
\appendix

\section{Proof of Theorem~\ref{conj-coro:nfold-additive}}~\label{appendix-1}

\begin{th3}
Given is an integer program $\min\{\vew\vex: \sum_{i=1}^nD^i\vex^i=\veb^0, A^i\vex^i= \veb^i,1\le i\le n,\, \ve0\le \vex\le \veu, \vex\in \Z^{nt} \}$ with optimal objective value $\OPT$, where $A^i\in\Q^{s_A\times t_A}_{\ge0}$, $D^i\in\Q^{s_D\times t_D}_{\ge 0}$, and $t_A=t_D=t$.   
	Then for arbitrary small $\varepsilon>0$, there exists an algorithm of running time ${f(s_A,s_D,t,\varepsilon)}\cdot\textnormal{poly}(|I|)$ which returns a near-feasible solution $\tilde{\vex}$ such that $\tilde{\vex}\in\{\vex: (1-\varepsilon)\veb^0\le\sum_{i=1}^nD^i\vex^i\le (1+\varepsilon)\veb^0,\, (1-\varepsilon)\veb^i\le A^i\vex^i\le (1+\varepsilon)\veb^i, 1\le i\le n,\, \ve0\le \vex\le\veu,\, \vex\in \Z^{nt} \}$, and $\vew\tilde{\vex}\le \OPT$. 
\end{th3}

\begin{proof}
Since $A^i\vex^i= \veb^i$, $\vex\ge \ve0$, and $A^i\in\Q^{s_A\times t_A}_{\ge0}$, we know $\veb^i\in\Q^{s_A}_{\ge0}$. For ease of discussion, we scale the values of all non-zero components of each vector $\veb^i$ to $1$, and keep the values on the zero components unchanged. By doing so, we have that $\veb^i\in \{0,1\}^{s_A}$ for all $i=1,\ldots,n$. 

Let $A^i_j$ be the $j$-th column of matrix $A^i$. By the constraint $A^i\vex^i= \veb^i$,  we have 
\begin{eqnarray}\label{column matrix}
& A^i_1x^i_1+A^i_2x^i_2+\cdots+A^i_tx^i_t= \veb^i.
\end{eqnarray}
If, say, some $h$-th coordinate of $\veb^i$ is $0$, we consider the $h$-th coordinate of $A_j^i$'s. There are two possibilities: (i). The $h$-th coordinate of some $A^i_j$ is nonzero. Then $x^i_j=0$ in any feasible solution. We simply fix $x_j^i=0$ and ignore this variable (as well as its corresponding $A_j^i$) throughout our following discussion. (ii). Assume without loss of generality that the $h$-th coordinate of every $A^i_j$ is $0$. Then the constraint induced by the $h$-th coordinates becomes $\sum_{j=1}^t 0\cdot x^i_j=0$, and we simply remove this constraint, i.e., we remove the $h$-th coordinate from $\veb^i$ and $A^i_j$. Hence, without loss of generality we may assume that $\veb^i=1$ for all $i$. From now on we still write $\veb^i$ for consistency, but notice that during all our subsequent discussion its zero entries are ignored. 



 \paragraph*{Step 0: Classify $A^i_j$'s as big or small, and establish an equivalent IP.} 

We fix $\psi\coloneqq\frac{\varepsilon}{2t}$ and classify all $A^i_j$'s into two different groups: If there is a component in vector $A^i_j$ with a value at least $\psi$, then the vector $A^i_j$ is defined as {\bf{big}}. Otherwise, if the values on all components of vector $A^i_j$ are less than $\psi$, then the vector $A^i_j$ is defined as {\bf{small}}.  According to~Eq(\ref{column matrix}), there are at most $\OO(1/\psi)$ big $A^i_j$'s in every dimension. Since $\veb^i=1$, there are at most $\OO(s_A/\psi)$ big $A^i_j$'s for each fixed $i$, that is, there are at most $\OO(s_A/\psi)$ non-zero $x^i_j$'s whose corresponding coefficients are big vectors for each fixed $i$.

If all $A^i_j$'s are big vectors in~Eq(\ref{column matrix}) for all $i$'s, then we get that $\sum_{j=1}^t x^i_j=\OO(s_A/\psi)$, which implies that $\|\vex^i\|_\infty\le\|\vex^i\|_1=\OO(s_A/\psi)$. By Theorem~\ref{conj:nfold-additive}, there exists an algorithm of running time $2^{2^{\OO(s_D(s_A/\varepsilon)^{t^{\OO(1)}})}}\cdot\textnormal{poly}(|I|)={f(s_A,s_D,t,\varepsilon)}\cdot\textnormal{poly}(|I|)$ which returns a near-feasible solution $\tilde{\vex}$ such that $\tilde{\vex}\in\{\vex: (1-\varepsilon)\veb^0\le\sum_{i=1}^nD^i\vex^i\le (1+\varepsilon)\veb^0, \,A^i\vex^i= \veb^i,\, \ve0\le \vex\le \veu, \vex\in \Z^{nt} \}$, and $\vew\tilde{\vex}\le \OPT$. Theorem~\ref{conj-coro:nfold-additive} follows.



Hence, from now on we assume
there exists some small vector $A^i_j$ in~Eq(\ref{column matrix}) for some $i$. In this case, for every $A^i_j$ we multiply it with some $\lambda_j^i$ to make it big. More precisely, Eq(\ref{column matrix}) 
can be written as
\begin{eqnarray*}\label{column matrix1}
A^i_jx^i_j=\lambda^i_jA^i_j\left\lfloor\frac{x^i_j}{\lambda^i_j}\right\rfloor+A^i_j\left(x^i_j-\lambda^i_j\left\lfloor\frac{x^i_j}{\lambda^i_j}\right\rfloor\right),
\end{eqnarray*}
where the value of $\lambda^i_j\ge 1,\lambda^i_j\in \Z$ is chosen to make the values of some components of $\lambda^i_jA^i_j$ no less than $\psi$, i.e., $\lambda^i_j$ is the smallest integer such that $\lambda^i_jA^i_j$ is big.  

Let $(x^i_j)'\coloneqq\lfloor\frac{x^i_j}{\lambda^i_j}\rfloor$, and $(x^i_j)''\coloneqq x^i_j-\lambda^i_j\lfloor\frac{x^i_j}{\lambda^i_j}\rfloor$. 
Then we have 
\begin{eqnarray*}\label{column matrix2}
A^i_jx^i_j=\lambda^i_jA^i_j(x^i_j)'+A^i_j(x^i_j)''.
\end{eqnarray*}

We have the following straightforward observation.

\begin{observation}\label{obs:split}
\begin{itemize}
\item $(x^i_j)''<\lambda^i_j$;
\item $(x^i_j)''=0$ if $\lambda^i_j=1$;
\item If $(x^i_j)''>0$, then all coordinates of $A^i_j$ are less than $\psi$.
\item If $(x^i_j)''>0$, then $\|\lambda^i_jA^i_j\|_{\infty}\le 2\psi$.
\end{itemize}
\end{observation}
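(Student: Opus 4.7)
The plan is to verify each of the four bullet points directly from the definitions $(x^i_j)' \coloneqq \lfloor x^i_j/\lambda^i_j \rfloor$ and $(x^i_j)'' \coloneqq x^i_j - \lambda^i_j \lfloor x^i_j/\lambda^i_j \rfloor$, together with the minimality of $\lambda^i_j$ (the smallest positive integer such that $\lambda^i_j A^i_j$ is big). None of the four statements is deep; the whole observation is a bookkeeping lemma, so the proof is a sequence of short arithmetic arguments.

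For the first bullet, I would invoke the standard bound $0 \le t - \lfloor t \rfloor < 1$ applied at $t = x^i_j/\lambda^i_j$, and multiply through by $\lambda^i_j > 0$ to obtain $0 \le (x^i_j)'' < \lambda^i_j$. The second bullet is immediate from the first since $\lambda^i_j = 1$ forces $(x^i_j)'' < 1$, and $(x^i_j)''$ is an integer (as both $x^i_j$ and $\lambda^i_j \lfloor x^i_j/\lambda^i_j \rfloor$ are), hence $(x^i_j)'' = 0$.

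The third bullet is proved by contraposition. If some coordinate of $A^i_j$ already attains value at least $\psi$, then $A^i_j$ itself is big by the definition from Step~0, so the smallest integer $\lambda^i_j \ge 1$ making $\lambda^i_j A^i_j$ big is $\lambda^i_j = 1$. The second bullet then forces $(x^i_j)'' = 0$, contradicting $(x^i_j)'' > 0$. Hence every coordinate of $A^i_j$ is strictly less than $\psi$.

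The fourth bullet uses the minimality of $\lambda^i_j$. Assuming $(x^i_j)'' > 0$, we have $\lambda^i_j \ge 2$ (otherwise we'd be in the situation of the second bullet). By minimality, $(\lambda^i_j - 1) A^i_j$ is not big, so every coordinate of $(\lambda^i_j - 1) A^i_j$ is strictly less than $\psi$. Combined with the third bullet, which gives $\|A^i_j\|_\infty < \psi$, the triangle inequality yields
\[
\|\lambda^i_j A^i_j\|_\infty \;\le\; \|(\lambda^i_j - 1) A^i_j\|_\infty + \|A^i_j\|_\infty \;<\; \psi + \psi \;=\; 2\psi,
\]
which gives the claimed bound $\|\lambda^i_j A^i_j\|_\infty \le 2\psi$. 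I do not anticipate any real obstacle here; the only point that deserves a moment of care is to make sure the third bullet is proved before the fourth, since the latter uses $\|A^i_j\|_\infty < \psi$ supplied by the former.
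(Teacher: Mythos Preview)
Your verification is correct in all four parts; the paper itself offers no proof, calling the observation ``straightforward,'' and your direct unwinding of the definitions together with the minimality of $\lambda^i_j$ is exactly the intended justification. Nothing further is needed.
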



 

Similarly, let $D^i_j$ be the $j$-th column of matrix $D^i$. The constraint $\sum_{i=1}^nD^i\vex^i=\veb^0$ can be rewritten as
\begin{eqnarray*}\label{column matrix3}
\sum_{i=1}^n\sum_{j=1}^t D^i_jx^i_j= \veb^0,
\end{eqnarray*}
where $D^i_jx^i_j$ can be further rewritten as
\begin{eqnarray*}
D^i_jx^i_j=\lambda^i_jD^i_j(x^i_j)'+D^i_j(x^i_j)''.
\end{eqnarray*}
Note that the two vectors $\lambda^i_jD^i_j$ and $D^i_j$ are not necessarily big (or small). Now we can rewrite the input IP as follows. 
\begin{subequations}
 	\begin{eqnarray}
 	(\text{IP}_5)  \quad  \min && \sum_{i=1}^n\sum_{j=1}^t w^i_j\big((x^i_j)'+(x^i_j)''\big)\label{mip:05}\\
 	&&\sum_{i=1}^n\sum_{j=1}^t \Big(\lambda^i_jD^i_j(x^i_j)'+D^i_j(x^i_j)''\Big)= \veb^0\label{IP5_10b}\\
 &&\sum_{j=1}^t \Big(\lambda^i_jA^i_j(x^i_j)'+A^i_j(x^i_j)''\Big)=\veb^i, \quad \forall  1\le i\le n\label{IP5_10c}\\
 &&(x^i_j)'\in[0,(u^i_j)'], (x^i_j)''\in[0,(u^i_j)''], 	\quad\forall  1\le i\le n, 1\le j\le t\nonumber\\	
 && (x^i_j)'\in\Z, (x^i_j)''\in\Z, \quad\forall  1\le i\le n, 1\le j\le t \nonumber
 	\end{eqnarray}
 \end{subequations}
Recall that $0\le x^i_j\le u^i_j$, so here $(u^i_j)'\coloneqq \lfloor\frac{u^i_j}{\lambda^i_j}\rfloor$ and $(u^i_j)''\coloneqq u^i_j-\lambda^i_j\lfloor\frac{u^i_j}{\lambda^i_j}\rfloor$. 

\paragraph*{Step 1: Transform $(\text{IP}_5)$ to a mixed IP with $\OO(1)$ integer variables.} 


Towards the transformation, we first replace the constraint $A^i\vex^i=\veb^i$ with something similar to $\vex^i\in \mathcal{P}^i$.

We first consider the ``minor part'', $(x^i_j)''$. By Observation~\ref{obs:split}, we have that
$$\sum_{j=1}^t A^i_j(x^i_j)''=\sum_{j: \lambda_j^i \neq 1} A^i_j(x^i_j)''+\sum_{j:\lambda_j^i = 1} A^i_j(x^i_j)''= \sum_{j: \lambda_j^i \neq 1} A^i_j(x^i_j)'' .$$  
Since $(x^i_j)''<\lambda_j^i$,  it always holds that
$$\sum_{j: \lambda_j^i \neq 1} A^i_j(x^i_j)''< \sum_{j: \lambda_j^i \neq 1} A^i_j\lambda_j^i \le t \cdot 2(\psi,\psi,\dots,\psi)^\top=2t(\frac{\varepsilon}{2t},\frac{\varepsilon}{2t},\dots,\frac{\varepsilon}{2t})^\top=(\varepsilon,\varepsilon,\dots,\varepsilon)^\top.$$ This means, whatever the value of $(x^i_j)''$ is, its contribution to $A^i\vex^i$ is negligible. 


Next, we consider the ``major part'', $(x^i_j)'$. The fact that $\veb^i\in \{0,1\}^{s_A}$ and $\lambda_j^iA_j^i$ is big guarantees that $(\vex^i)'$ can only take a few distinct values. More precisely, 
it is easy to verify that $\|(\vex^i)'\|_1=\OO(s_A/\psi)$, and hence we can obtain all possible feasible solutions to $(1-\varepsilon)\veb^i\le A^i\vex^i\le (1+\varepsilon)\veb^i$ by a straightforward enumeration in $\OO(s_A/\psi)^t=\OO(s_At/\varepsilon)^t$ time. Denote by $\mathcal{P}^i$ the set of these solutions.

Now it is easy to see that the constraint $A^i\vex^i=\veb^i$ can be safely replaced by $(\vex^i)'\in \mathcal{P}^i$.

Next, we deal with Constraint~(\ref{IP5_10b}): $\sum_{i=1}^n\sum_{j=1}^t \Big(\lambda^i_jD^i_j(x^i_j)'+D^i_j(x^i_j)''\Big)= \veb^0$. We shall deal with the two summands separately. The way we deal with $\sum_{i=1}^n\sum_{j=1}^t \lambda^i_jD^i_j(x^i_j)'$ is the same as Step 1 of Theorem~\ref{conj:nfold-additive}, and the way we deal with $\sum_{i=1}^n\sum_{j=1}^t D^i_j(x^i_j)''$ is the same as Step 1 of Theorem~\ref{conj:IP-general}.


\begin{description}
\item[Step 1.1: Deal with $\sum_{i=1}^n\sum_{j=1}^t \lambda^i_jD^i_j(x^i_j)'$:]
\end{description}

Let $\kappa\coloneqq\|(\vex^i)'\|_\infty=\OO(s_A/\psi)$. 
Recall that $(\vex^i)'\in \mathcal{P}^i$ and let
$\tau\coloneqq\max_{i\in[n]}|\mathcal{P}^i|=\kappa^t$. 
By adding dummy vectors, we may assume $|\mathcal{P}^i|=\tau$ for all $i$. Let $\mathcal{P}^i=\{(\vex^i_1)',(\vex^i_2)',\dots,(\vex^i_{\tau})'\}$. Note that from now on every $(\vex^i_j)'$ is a fixed value rather than a variable. Define $\mathcal{D}^i=\big((D^i)'(\vex^i_1)',(D^i)'(\vex^i_2)',\dots,(D^i)'(\vex^i_{\tau})'\big)$, where $(D^i)'=(\lambda^i_1D^i_1,\lambda^i_2D^i_2,\dots,\lambda^i_t D^i_t).$ The following is essentially the same as Step 1 of Theorem~\ref{conj:nfold-additive} except that the parameters may take different values. For the completeness of this paper, we present all the details.

Let $\Delta_1=\max_{i\in[n]}\|(D^i)'\|_\infty$. It is clear that $\|\mathcal{D}^i\|_\infty=\OO(s_A/\phi)\Delta_1=\OO(s_At\Delta_1/\varepsilon)$. We cut the box $[-\|\mathcal{D}^i\|_\infty,\|\mathcal{D}^i\|_\infty]^{s_D}$ into $(\frac{2}{\delta_1})^{s_D}$ small $\delta_1$-boxes of the form $\prod_{i=1}^{s_D} [(\lambda_i-1)\delta_1\|\mathcal{D}^i\|_\infty,\lambda_i\delta_1\|\mathcal{D}^i\|_\infty]$, where we shall pick a sufficiently small $\delta_1=\frac{\varepsilon}{s_D t\kappa^{t+1}}=\frac{1}{s_D}(\frac{\varepsilon}{s_At})^{\OO(t)}$. 
All the $(D^i)'(\vex^i_\phi)'$'s that are in the same small box are called similar. 

We pick a fixed vector within each $\delta_1$-box and let it be the canonical vector of this $\delta_1$-box. Each $(D^i)'(\vex^i_\phi)'$ lies in some $\delta_1$-box, and we say it corresponds to the canonical vector of this $\delta_1$-box. Hence, we may write $\mathcal{D}^i=\mathcal{V}^i+\tilde{\mathcal{D}}^i$ where $\mathcal{V}^i$ is the matrix consisting of the canonical vectors corresponding to the column vectors of $\mathcal{D}^i$, and $\tilde{\mathcal{D}}^i$ is the residue matrix satisfying that $\|\tilde{\mathcal{D}}^i\|_\infty\le \delta_1\|\mathcal{D}^i\|_\infty=\OO(\delta_1s_At\Delta_1/\varepsilon)$. Furthermore, 
 we make two observations: 

 (i). There are $\rho\coloneqq((\frac{2}{\delta_1})^{s_D})^{\tau}=(\frac{2}{\delta_1})^{s_D\tau}$ different types of canonical matrix $\mathcal{V}^i$'s since each $\mathcal{V}^i$ has $\tau$ columns. For $k=1,\ldots,\rho$, let $I_k$ denote the set of indices of $\mathcal{D}^i$'s whose canonical matrix is of type $k$. Denote by $(\vev^k_1,\vev^k_2,\dots,\vev^k_{\tau})$ the canonical matrix of type $k$. Then if the canonical matrix of $\mathcal{D}^i$ is of type $k$, we have that 
 $$\tilde{\mathcal{D}}^i=\mathcal{D}^i-\mathcal{V}^i=\big((D^i)'(\vex^i_1)'-\vev^k_1,(D^i)'(\vex^i_2)'-\vev^k_2,\dots,(D^i)'(\vex^i_\tau)'-\vev^k_{\tau}\big).$$  
 
 (ii). With $(\vex^i)'\in\mathcal{P}^i$, the constraint $\sum_{i=1}^n(D^i)'(\vex^i)'$ can be interpreted as selecting some column from each $\mathcal{D}^i$. 
 Hence, we introduce a binary variable $z_{i\phi}$ such that $z_{i\phi}=1$ denotes that we select the $\phi$-th column of $\mathcal{D}^i$, and $z_{i\phi}=0$ otherwise. Notice that for $i\in I_k$, $\mathcal{D}^i$ corresponds to the canonical matrix of type $k$. We define $y_{k\phi}\coloneqq\sum_{i\in I_k}z_{i\phi}$, where $y_{k\phi}$ can be interpreted as counting how many times the $\phi$-th column of the type-$k$ canonical matrix are selected. With the new variables, we may rewrite the constraint as:
\begin{eqnarray}
\sum_{i=1}^n\sum_{j=1}^t \lambda^i_jD^i_j(x^i_j)'=\sum_{k=1}^{\rho}\sum_{\phi=1}^\tau \vev_\phi^k y_{k\phi}+ \sum_{k=1}^{\rho}\sum_{i\in I_k}\sum_{\phi=1}^\tau ((D^i)'(\vex^i_\phi)'-\vev_\phi^k)z_{i\phi},\label{equality_11}
\end{eqnarray}
together with two extra constraints: 
\begin{eqnarray}
y_{k\phi}=\sum_{i\in I_k}z_{i\phi}, \quad \forall 1\le \phi\le \tau, 1\le k\le \rho,\label{equality_12}
\end{eqnarray}
and
\begin{eqnarray}
\sum_{\phi=1}^{\tau}z_{i\phi}=1, \quad \forall  i\in I_k, 1\le k\le \rho.\label{equality_13}
\end{eqnarray}
Following this notation, $\sum_{i=1}^n\sum_{j=1}^t w^i_j(x^i_j)'$ in (\ref{mip:05}) can be rewritten as $\sum_{i=1}^{n}(\sum_{\phi=1}^{\tau} \vew^i(\vex^i_\phi)'z_{i\phi})$.

\begin{description}
\item[Step 1.2: Deal with $\sum_{i=1}^n\sum_{j=1}^t D^i_j(x^i_j)''$:]
\end{description}

 This part is essentially the same as Step 1 of Theorem~\ref{conj:IP-general}, except that the parameters may take different values. We present the details for completeness.

Let $\Delta_2=\max_{i\in[n]}\|D^i\|_\infty$, then $D^i_j\in \interval {-\Delta_2}{\Delta_2}^{s_D}$.
We cut $\interval{-\Delta_2}{\Delta_2}^{s_D}$ into $(2/\delta_2)^{s_D}$ small boxes (called $\delta_2$-boxes), where $\delta_2=\OO(\frac{\varepsilon}{s_D})$. Each $\delta_2$-box is of the form $\prod_{i=1}^{s_D} \interval {(\lambda_i-1)\delta_2\Delta_2}{\lambda_i\delta_2\Delta_2}$, where $\lambda_i\in\{-1/\delta_2+1,-1/\delta_2+2,\dots,1/\delta_2\}$.  We arbitrarily index all the $\delta_2$-boxes, and let $I_d$ be the set of indices $(i,j)$'s of $D^i_j$'s that are in the $d$-th $\delta_2$-box, where $d=1,\ldots, (2/\delta_2)^{s_D}$. 
 

For the $d$-th $\delta_2$-box, we arbitrarily pick one vector $\vev_d$ as its fixed canonical vector. For each $(i,j)\in I_d$, we define $\tilde{D}_j^i=D^i_j-\vev_d$, and then it is clear that $\|\tilde{D}_j^i\|_{\infty}\le \delta_2\Delta_2$.  
Now we can rewrite $\sum_{i=1}^n\sum_{j=1}^t D^i_j(x^i_j)''$ as follows by introducing a new variable 
\begin{eqnarray}
y_d \coloneqq \sum_{(i,j)\in I_d}(x^i_j)'', \quad \forall 1\le d\le (2/\delta_2)^{s_D},\label{equality_15}
\end{eqnarray}
and thus
 \begin{eqnarray}
\sum_{i=1}^n\sum_{j=1}^t D^i_j(x^i_j)''=\sum_{d=1}^{(2/\delta_2)^{s_D}}\sum_{(i,j)\in I_d}(\vev_d+\tilde{D}_j^i)(x^i_j)''=\sum_{d=1}^{(2/\delta_2)^{s_D}}\vev_dy_d+\sum_{d=1}^{(2/\delta_2)^{s_D}}\sum_{(i,j)\in I_d} \tilde{D}_j^i(x^i_j)'',\label{equality_14}
\end{eqnarray}

 
 Now we are ready to reformulate $(\text{IP}_5)$ as a mixed IP. Combining the equations~(\ref{equality_11}),~(\ref{equality_12}),~(\ref{equality_13}), (\ref{equality_15}),~(\ref{equality_14}) with the objective function, we have
 \begin{subequations}
\begin{eqnarray}
 	(\text{MIP}_6)  \quad  \min && \sum_{i=1}^{n}(\sum_{\phi=1}^{\tau} \vew^i(\vex^i_\phi)'z_{i\phi})+\sum_{i=1}^n\sum_{j=1}^t w^i_j(x^i_j)''\\
&&\sum_{k=1}^{\rho}\sum_{\phi=1}^\tau \vev_\phi^k y_{k\phi}+ \sum_{k=1}^{\rho}\sum_{i\in I_k}\sum_{\phi=1}^\tau ((D^i)'(\vex^i_\phi)'-\vev_\phi^k)z_{i\phi}\nonumber\\
&&+\sum_{d=1}^{(2/\delta_2)^{s_D}}\vev_dy_d+\sum_{d=1}^{(2/\delta_2)^{s_D}}\sum_{(i,j)\in I_d} \tilde{D}_j^i(x^i_j)''=\veb^0\label{IP6_10b}\\
 && y_{k\phi}=\sum_{i\in I_k}z_{i\phi}, \quad \forall 1\le \phi\le \tau, 1\le k\le \rho\label{IP6_10c}\\
 &&\sum_{\phi=1}^{\tau}z_{i\phi}=1, \quad \forall  i\in I_k, 1\le k\le \rho\label{IP6_10d}\\
 &&\sum_{(i,j)\in I_d}(x^i_j)''=y_d, \quad \forall 1\le d\le (2/\delta_2)^{s_D}\\
 &&y_{k\phi}\in\Z, z_{i\phi}\in[0,1],  	\quad \forall  1\le \phi\le \tau, i\in I_k, 1\le k\le \rho\nonumber\\
	&&y_d\in \Z, (x^i_j)''\in[0,(u^i_j)''], \quad\forall  (i,j)\in I_d, 1\le d\le (2/\delta_2)^{s_D}\nonumber
 	\end{eqnarray}
\end{subequations}
Observe that $(\text{MIP}_6)$ relaxes $z_{i\phi}\in\{0,1\}$ to $z_{i\phi}\in[0,1]$ and removes the constraint $(x^i_j)''\in\Z$. Thus, it only contains $\rho\tau$ integer variables $y_{k\phi}$, and $(2/\delta_2)^{s_D}$ integer variables $y_d$. Thus, applying Kannan's algorithm~\cite{kannan1987minkowski}, an optimal solution to $(\text{MIP}_6)$ can be computed in $(\rho\tau+(2/\delta_2)^{s_D})^{\OO(\rho\tau+(2/\delta_2)^{s_D})}\cdot |I|$ time. Recall that $\rho=(\frac{2}{\delta_1})^{s_D\tau}$, $\tau=\OO(s_At/\varepsilon)^t$, $\delta_1=\frac{1}{s_D}(\frac{\varepsilon}{s_At})^{\OO(t)}$, and $\delta_2=\OO(\frac{\varepsilon}{s_D})$, so $(\text{MIP}_6)$ can be computed in $2^{\big({2^{{(s_D\cdot ({s_At}/{\varepsilon})^t)^{\OO(1)}}}+({s_D}/{\varepsilon})^{\OO(s_D)}\big)}^{\OO(1)}}\cdot |I|=f(s_A,s_D,t,\varepsilon)\cdot \textnormal{poly}(|I|)$ time.  

Let $y_{k\phi}=y_{k\phi}^*$, $y_d=y_d^*$, $(x^i_j)''=(x^i_j)''^*$, and $z_{i\phi}=z_{i\phi}^*$ be the optimal solution to $(\text{MIP}_6)$. Note that $y_{k\phi}^*\in\Z$, $y_d^*\in\Z$. By fixing those integer variables, $(\text{MIP}_6)$ becomes an LP with its optimal solution being $(x^i_j)''^*\in \R$, and $z_{i\phi}^*\in [0,1]$. The following step is devoted to rounding these variables. 

\paragraph*{Step 2: Obtain a feasible solution with $\OO(s_D\tau)$ variables taking a fractional value.}

When we round the fractional variables, we can decompose $(\text{MIP}_6)$ into the following two separate LPs:
\begin{subequations}
\begin{eqnarray}
 	(\text{LP}_7)  \quad  \min  && \sum_{i=1}^{n}(\sum_{\phi=1}^{\tau} \vew^i(\vex^i_\phi)'z_{i\phi})\nonumber\\
&&\sum_{k=1}^{\rho}\sum_{\phi=1}^\tau \vev_\phi^k y_{k\phi}+ \sum_{k=1}^{\rho}\sum_{i\in I_k}\sum_{\phi=1}^\tau ((D^i)'(\vex^i_\phi)'-\vev_\phi^k)z_{i\phi}\nonumber\\
&&= \sum_{k=1}^{\rho}\sum_{\phi=1}^\tau \vev_\phi^k y_{k\phi}^*+ \sum_{k=1}^{\rho}\sum_{i\in I_k}\sum_{\phi=1}^\tau ((D^i)'(\vex^i_\phi)'-\vev_\phi^k)z_{i\phi}^*\nonumber\\
 && \sum_{i\in I_k}z_{i\phi}=y_{k\phi}^*, \quad \forall 1\le \phi\le \tau, 1\le k\le \rho\nonumber\\
 &&\sum_{\phi=1}^{\tau}z_{i\phi}=1, \quad \forall  i\in I_k, 1\le k\le \rho\nonumber\\
 && z_{i\phi}\in[0,1],  	\quad \forall  1\le \phi\le \tau, i\in I_k, 1\le k\le \rho\nonumber
 	\end{eqnarray}
\end{subequations}
 \begin{subequations}
\begin{eqnarray*}
 	(\text{LP}_8)  \quad \min  && \sum_{i=1}^n\sum_{j=1}^t w^i_j(x^i_j)''\\
&&\sum_{d=1}^{(2/\delta_2)^{s_D}}\sum_{(i,j)\in I_d} \tilde{D}_j^i(x^i_j)''=\sum_{d=1}^{(2/\delta_2)^{s_D}}\sum_{(i,j)\in I_d} \tilde{D}_j^i(x^i_j)''^*\\
 &&\sum_{(i,j)\in I_d}(x^i_j)''=y_d^*, \quad \forall 1\le d\le (2/\delta_2)^{s_D}\\
	&&(x^i_j)''\in[0,(u^i_j)''], \quad\forall  (i,j)\in I_d, 1\le d\le (2/\delta_2)^{s_D}
 	\end{eqnarray*}
\end{subequations}

Observe that $(\text{LP}_7)$ and $(\text{LP}_8)$ satisfy the structure of the LP obtained in Step 2 of  Theorem~\ref{conj:nfold-additive} and Theorem~\ref{conj:IP-general} respectively, so we can directly apply the rounding procedures there.


\paragraph*{Step 3: Round these $\OO(s_D\tau)$ fractional variables.}

 The rounding procedure in this step also follows straightforwardly from applying Step 3 of Theorem~\ref{conj:nfold-additive} and Theorem~\ref{conj:IP-general} to $(\text{LP}_7)$ and $(\text{LP}_8)$, since the two parts $z_{i\phi}$ and $(x^i_j)''$ are  relatively independent. Hence, it brings a total error of $\OO(s_D\tau)\cdot \max\{\delta_1 \|{\mathcal{D}}^i\|_\infty,\delta_2 \Delta_2\}$ to Constraint~(\ref{IP6_10b}).  
Note that $\OO(s_D\tau)\cdot \max\{\delta_1 \|{\mathcal{D}}^i\|_\infty,\delta_2 \Delta_2\}\le \OO(s_D\tau)\cdot\max\{\delta_1,\delta_2\}\cdot b^0_j$ in the $j$-th dimension. We can take $\delta_1=\frac{1}{s_D}(\frac{\varepsilon}{s_At})^{\OO(t)},\delta_2=\OO(\frac{\varepsilon}{s_D})$ to make $\OO(s_D\tau)\cdot\max\{\delta_1,\delta_2\}\le \varepsilon$. Therefore, we have $(1-\varepsilon)\veb^0\le\sum_{i=1}^nD^i\vex^i\le (1+\varepsilon)\veb^0$.

\begin{description}
    \item[Running time.] 
\end{description}

Step 1: $2^{{2^{{({s_D}/{\varepsilon})^{\OO(s_D)}\cdot ({s_At}/{\varepsilon})^{\OO(t)}}}}}\cdot |I|$


Step 2: $(\rho n)^4\cdot |I|=2^{{(s_D\cdot ({s_At}/{\varepsilon})^t)^{\OO(1)}}} n^4\cdot |I|$

Step 3: $\rho\tau^2 n^2\log n$+$(2/\delta_2)^{s_D}\cdot n\log n=(2^{{(s_D\cdot ({s_At}/{\varepsilon})^t)^{\OO(1)}}}+({s_D}/{\varepsilon})^{\OO(s_D)})\cdot n^2\log n$

All in all, the total time is $2^{{2^{{({s_D}/{\varepsilon})^{\OO(s_D)}\cdot ({s_At}/{\varepsilon})^{\OO(t)}}}}}\cdot \textnormal{poly}(|I|)$. 
\end{proof}

\end{document}